\newtheorem{theorem}{Theorem}[section]
\newtheorem{lemma}[theorem]{Lemma}
\newtheorem{proposition}[theorem]{Proposition}
\newtheorem{corollary}[theorem]{Corollary}
\theoremstyle{definition}
\newtheorem{definition}[theorem]{Definition}
\DeclareMathOperator{\Ext}{Ext}
\DeclareMathOperator{\Hom}{Hom}
\DeclareMathOperator{\Tor}{Tor}
\newcommand{\cat}[1]{\mathcal{#1}}           
\newcommand{\class}[1]{\mathcal{#1}}   
\newcommand{\Z}{\mathbb{Z}}
\newcommand{\Q}{\mathbb{Q/Z}}
\newcommand{\ch}{\textnormal{Ch}(R)}
\newcommand{\rmod}{R\textnormal{-Mod}}
\newcommand{\modr}{\textnormal{Mod-}R}
\newcommand{\rightperp}[1]{#1^{\perp}}
\newcommand{\leftperp}[1]{{}^\perp #1}
\begin{document}

\title{AC-Gorenstein rings and their stable module categories}

\author{James Gillespie}
\thanks{2010 Mathematics Subject Classification. 16E65, 18G25, 55U35}
\address{Ramapo College of New Jersey \\
         School of Theoretical and Applied Science \\
         505 Ramapo Valley Road \\
         Mahwah, NJ 07430}
\email[Jim Gillespie]{jgillesp@ramapo.edu}
\urladdr{http://pages.ramapo.edu/~jgillesp/}

\date{\today}

\begin{abstract}
We introduce what is meant by an AC-Gorenstein ring. It is a generalized notion of Gorenstein ring which is compatible with the Gorenstein AC-injective and Gorenstein AC-projective modules of Bravo-Gillespie-Hovey. It is also compatible with the notion of $n$-coherent rings introduced by Bravo-Perez:
So a $0$-coherent AC-Gorenstein ring is precisely a usual Gorenstein ring in the sense of Iwanaga, while a $1$-coherent AC-Gorenstein ring is precisely a Ding-Chen ring. We show that any AC-Gorenstein ring admits a stable module category that is compactly generated and is the homotopy category of two Quillen equivalent abelian model category structures. One is projective with cofibrant objects the Gorenstein AC-projective modules while the other is an injective model structure with fibrant objects the Gorenstein AC-injectives.
\end{abstract}

\maketitle

\section{Introduction}\label{sec-intro}

Throughout this paper we let $R$ be a ring with identity, not necessarily commutative. If $R$ is both left and right Noetherian, we say $R$ is a Gorenstein ring if it has finite injective dimension, as both a left and right module over itself. These rings were introduced and studied by Iwanaga in~\cite{iwanaga} and~\cite{iwanaga2}, generalizing (commutative) Gorenstein rings. Perhaps the most important fact about these rings is the following: Suppose the (left and right) injective dimension of $R$ is at most $d$. Then an $R$-module has finite flat dimension if and only if it has finite injective dimension, and if this is the case, each dimension can be at most $d$. We can think of this as the Fundamental Theorem of Gorenstein rings, for it leads to a very satisfying theory of Gorenstein homological algebra, which has been studied in particular by Edgar Enochs and many coauthors, but also by many other authors.

From the homotopy theoretic standpoint, the Fundamental Theorem leads to two Quillen equivalent abelian model structures on $R$-Mod, the category of left (or right) $R$-modules. This was shown by Hovey in~\cite{hovey}. Of the two model structures he constructed, one is projective in nature, with the cofibrant objects being precisely the Gorenstein projective modules from the theory of Gorenstein homological algebra, while the other is injective in its nature - the fibrant objects are the Gorenstein injective modules. But the two model structures are balanced in the sense that each have the same trivial objects. This is precisely the class of all $R$-modules satisfying the Fundamental Theorem. That is, they have finite flat (equivalently, injective) dimension, bounded by $d$. The homotopy category of these model structures provides a stable module category, denoted Stmod($R$), which is a compactly generated triangulated category.

The point of this paper is to give a generalization of Hovey's constructions described above to non-Noetherian rings. Of course we first need an appropriate notion of Gorenstein ring for non-Noetherian rings. Now for an arbitrary ring $R$, it was shown in~\cite{bravo-gillespie-hovey} that the so-called absolutely clean modules coincide with the injective modules over Noetherian rings. In fact, they even coincide with the absolutely pure modules over coherent rings, and yet over any ring they share the many important homological properties enjoyed by absolutely pure modules over coherent rings. For example, the class of absolutely clean modules is always closed under direct limits, contains all injective modules, is closed under cokernels of monomorphisms between absolutely clean modules, and there is a set (not just a proper class) of absolutely clean modules so that every absolutely clean module is built from ones in that set, as a transfinite extension.

Absolutely clean modules are defined as the modules that are injective relative to modules of type $FP_{\infty}$, where a module $F$ is said to be of \emph{type $FP_{\infty}$} if it admits a projective resolution
$$\cdots \xrightarrow{} P_2  \xrightarrow{} P_1  \xrightarrow{} P_0  \xrightarrow{} F  \xrightarrow{} 0$$ with each $P_i$ a finitely generated projective $R$-module.
So an $R$-module $A$ is called \emph{absolutely clean} if $\Ext^1_{R}(F,A)=0$ for all $R$-modules $F$ of type $FP_{\infty}$. On the other hand, we can widen our notion of flat modules too, and call a (left) $R$-module $L$ \emph{level} if $\Tor_1^R(F,L) = 0$ for all (right) $R$-modules $F$ of type $FP_{\infty}$. A ring is (right) coherent if and only if the class of level (left) modules coincide with the flat modules, but the level modules enjoy the nice homological properties that the flat modules over coherent rings enjoy. In particular, level modules are closed under products over any ring $R$.

So a natural first attempt to extend Gorenstein rings beyond the Noetherian setting would be to consider rings for which both ${}_RR$ and $R_R$ have finite absolutely clean dimension. Then imitating the proof of the Fundamental Theorem of Gorenstein rings, one begins to argue as follows: (i) Any free (left) $R$-module, being a direct sum of ${}_RR$, must also have finite absolutely clean dimension. Then (ii), any projective $R$-module, being a retract of a free module, must also have finite absolutely clean dimension. So then (iii), any flat $R$-module, being a direct limit of projective modules, must also have finite absolutely clean dimension. If the ring is coherent, you are on your way to proving the Fundamental Theorem because the level modules coincide with the flat modules over coherent rings. In fact, this generalization to coherent rings, including a statement of the Fundamental Theorem in this context, was worked out by Ding and Chen in~\cite{ding and chen 93} and~\cite{ding and chen 96}. These rings are now often called Ding-Chen rings. However, for non-coherent rings, the (left) level modules are not constructed in any reasonable way from ${}_RR$. So this first attempt described above fails. What we must do instead is to define \emph{AC-Gorenstein rings} (AC is referencing ``absolutely clean'') to be the rings for which \emph{all} (left and right) level modules, not just ${}_RR$ and $R_R$, are of finite absolutely clean dimension. We show in Section~\ref{sec-dimensions} that it is equivalent to require that all (left and right) absolutely clean modules are of finite level dimension. This leads to the main point, made in Section~\ref{sec-AC-Goren} that the Fundamental Theorem generalizes to any AC-Gorenstein ring; see Theorem~\ref{prop-dimensions}.

Indeed Theorem~\ref{prop-dimensions} describes the trivial objects for two different model structures on $R$-Mod, when $R$ is any AC-Gorenstein ring. In Section~\ref{sec-proj-model} we construct the projective model structure. Its cofibrant objects are the \emph{Gorenstein AC-projective} $R$-modules of~\cite{bravo-gillespie-hovey}; see Definition~\ref{def-Gorenstein AC-projective}. This model structure is finitely generated and so produces a compactly generated stable module category. On the other hand, in Section~\ref{sec-inj-model} we construct the dual injective model structure, with its fibrant objects being precisely the \emph{Gorenstein AC-injective} modules.

We point out that the generalization of Hovey's model structures, from the Noetherian to the coherent case, was already worked out in~\cite{gillespie-ding}. Our results agree exactly, so this paper is meant to provide the generalization in this direction beyond coherent rings. The nice thing is that our generalization is compatible with the notion of $n$-coherent rings recently studied in~\cite{bravo-perez}. We explain this connection in Section~\ref{sec-examples}, but briefly, Bravo and P\'erez classify $n$-coherent rings in terms of the absolutely clean modules. The (left) 0-coherent rings are precisely the (left) Noetherian rings while the (left) 1-coherent rings are precisely the usual (left) coherent rings. Section~\ref{sec-examples} also looks at the example of graded $R[x]/(x^2)$-modules, showing that in some cases their stable (graded) module categories are recovering $\class{D}(R)$, the derived category of the ring $R$.

\

\noindent \textbf{Acknowledgement:} The author thanks Marco P\'erez for reading this paper and providing helpful comments and feedback.

\section{Preliminaries}\label{sec-preliminaries}

Throughout the paper $R$ denotes a general ring with identity. We will work with both left and right $R$-modules.  But we will favor the left, so that $R$-module will mean left $R$-module, unless stated otherwise. The category of left $R$-modules will be denoted $\rmod$, while the category of right $R$-modules will be denoted $\modr$.

\subsection{Cotorsion pairs} Let $\cat{A}$ be an abelian category such as $R$-Mod.  By definition, a pair of classes $(\class{X},\class{Y})$ in $\cat{A}$ is called a \emph{cotorsion pair} if $\class{Y} = \rightperp{\class{X}}$ and $\class{X} = \leftperp{\class{Y}}$. Here, given a class of objects $\class{C}$ in $\cat{A}$, the right orthogonal  $\rightperp{\class{C}}$ is defined to be the class of all objects $X$ such that $\Ext^1_{\cat{A}}(C,X) = 0$ for all $C \in \class{C}$. Similarly, we define the left orthogonal $\leftperp{\class{C}}$. We call the cotorsion pair \emph{hereditary} if $\Ext^i_{\cat{A}}(X,Y) = 0$ for all $X \in \class{X}$, $Y \in \class{Y}$, and $i \geq 1$. The cotorsion pair is \emph{complete} if it has enough injectives and enough projectives. This means that for each $A \in \cat{A}$ there exist short exact sequences $0 \xrightarrow{} A \xrightarrow{} Y \xrightarrow{} X \xrightarrow{} 0$ and $0 \xrightarrow{} Y' \xrightarrow{} X' \xrightarrow{} A \xrightarrow{} 0$ with $X,X' \in \class{X}$ and $Y,Y' \in \class{Y}$.
Standard references include~\cite{enochs-jenda-book} and~\cite{trlifaj-book} and connections to abelian model categories can be found in~\cite{hovey} and~\cite{gillespie-hereditary-abelian-models}.

\subsection{Projective and injective cotorsion pairs}
Assume $\cat{A}$ is a bicomplete abelian category with enough projectives. By a \emph{projective cotorsion pair}  in $\cat{A}$ we mean a complete cotorsion pair $(\class{C},\class{W})$ for which $\class{W}$ is thick and $\class{C} \cap \class{W}$ is the class of projective objects. Such a cotorsion pair is equivalent to a \emph{projective model structure} on $\cat{A}$. By this we mean the model structure is abelian in the sense of~\cite{hovey} and all objects are fibrant. The cofibrant objects in this case are exactly those in $\class{C}$ and the trivial objects are exactly those in $\class{W}$.  We also have the dual notion of an \emph{injective cotorsion pair} $(\class{W},\class{F})$ which gives us an \emph{injective model structure} on a bicomplete abelian category with enough injectives. One could see~\cite{gillespie-recollement} for more on projective and injective cotorsion pairs. An important fact is that such cotorsion pairs are always hereditary and this implies that the associated homotopy category must be stable; that is, it is not just pre-triangulated but a triangulated category. We will use the following proposition to construct projective cotorsion pairs in this paper. It is proved in~\cite[Prop.~3.4]{bravo-gillespie-hovey}.

\begin{proposition}[Construction of a projective model
structure]\label{prop-how to create a projective model structure}
Let $\cat{A}$ be a bicomplete abelian category with enough projectives
and denote the class of projectives by $\class{P}$. Let $\class{C}$ be
any class of objects and set $\class{W} =
\rightperp{\class{C}}$. Suppose the following conditions hold:
\begin{enumerate}
\item $(\class{C},\class{W})$ is a complete cotorsion pair.
\item $\class{W}$ is thick.
\item $\class{P} \subseteq \class{W}$.
\end{enumerate}
Then there is an abelian model structure on $\cat{A}$ where every
object is fibrant, $\class{C}$ are the cofibrant objects, $\class{W}$
are the trivial objects, and $\class{P} = \class{C} \cap \class{W}$
are the trivially cofibrant objects. That is,  $(\class{C},\class{W})$ is a projective cotorsion pair.
\end{proposition}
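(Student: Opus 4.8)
The plan is to recognize that the conclusion is precisely the assertion that $(\class{C},\class{W})$ is a \emph{projective cotorsion pair} in the sense recalled above; once that is established, the abelian model structure with the stated cofibrant, fibrant, and trivial classes follows from the known equivalence between projective cotorsion pairs and projective model structures \ulp see~\cite{hovey} and~\cite{gillespie-recollement}\urp. By definition a projective cotorsion pair is a complete cotorsion pair $(\class{C},\class{W})$ for which $\class{W}$ is thick and $\class{C}\cap\class{W}$ is the class $\class{P}$ of projective objects. The first two requirements are exactly hypotheses~(1) and~(2), so the whole argument reduces to the single equality $\class{C}\cap\class{W}=\class{P}$.

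First I would verify $\class{P}\subseteq\class{C}\cap\class{W}$: every projective $P$ satisfies $\Ext^1_{\cat{A}}(P,-)=0$, hence $P\in\leftperp{\class{W}}=\class{C}$, and $P\in\class{W}$ by hypothesis~(3). For the reverse inclusion, let $X\in\class{C}\cap\class{W}$ and, using that $\cat{A}$ has enough projectives, choose a short exact sequence $0\arr K\arr P\arr X\arr 0$ with $P\in\class{P}$. Since $P\in\class{P}\subseteq\class{W}$ by~(3) and $X\in\class{W}$, thickness of $\class{W}$ forces $K\in\class{W}=\rightperp{\class{C}}$; as $X\in\class{C}$ this gives $\Ext^1_{\cat{A}}(X,K)=0$, so the sequence splits and $X$ is a direct summand of the projective object $P$, hence itself projective. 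This splitting step — the one place hypotheses~(2) and~(3) are genuinely used — is the only part of the proof carrying any real content; everything else is formal bookkeeping.

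Alternatively, and equivalently, one may bypass the terminology of projective cotorsion pairs and instead feed Hovey's correspondence the triple $(\class{C},\class{W},\cat{A})$: here $(\class{C},\class{W}\cap\cat{A})=(\class{C},\class{W})$ is a complete cotorsion pair by~(1), while $(\class{C}\cap\class{W},\cat{A})=(\class{P},\cat{A})$ is a complete cotorsion pair precisely because $\cat{A}$ has enough projectives, and $\class{W}$ is thick by~(2). Hovey's theorem then yields the abelian model structure in which $\class{C}$ are the cofibrant objects, every object is fibrant, $\class{W}$ are the trivial objects, and $\class{P}=\class{C}\cap\class{W}$ are the trivially cofibrant objects. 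As the excerpt notes, a projective cotorsion pair is automatically hereditary, hence the homotopy category is triangulated, though this fact is not needed to construct the model structure itself.
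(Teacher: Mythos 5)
Your proof is correct, and since the paper itself does not prove this proposition but simply cites~\cite[Prop.~3.4]{bravo-gillespie-hovey}, you have filled in the argument that reference supplies. The reduction to the single equality $\class{C}\cap\class{W}=\class{P}$, the easy inclusion $\class{P}\subseteq\class{C}\cap\class{W}$, and the splitting argument for the reverse inclusion (presenting $X\in\class{C}\cap\class{W}$ as $0\to K\to P\to X\to 0$, using thickness and hypothesis~(3) to place $K\in\class{W}$, and then $\Ext^1_{\cat{A}}(X,K)=0$ to split) are exactly the standard steps, after which Hovey's correspondence produces the model structure. One small remark: in the ``alternative'' paragraph you present feeding the Hovey triple $(\class{C},\class{W},\cat{A})$ into Hovey's theorem as a way to bypass the first argument, but to know that $(\class{C}\cap\class{W},\cat{A})$ is a complete cotorsion pair you still need to identify $\class{C}\cap\class{W}$ with $\class{P}$, so the two routes are not genuinely independent — the splitting step remains the entire content of the proof either way.
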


\subsection{Finite type modules, absolutely clean, and level modules}\label{subsec-typeFP}

The following definitions were introduced in~\cite{bravo-gillespie-hovey} and are fundamental to this paper as well. Here we just give reminders of key definiteions. It is highly recommended that the reader see~\cite[Section~2]{bravo-gillespie-hovey} for further detail.

First, a module $F$ is said to be of \textbf{type $\boldsymbol{FP_{\infty}}$} if it has a projective resolution
$$\cdots \xrightarrow{} P_2  \xrightarrow{} P_1  \xrightarrow{} P_0  \xrightarrow{} F  \xrightarrow{} 0$$ with each $P_i$ finitely generated.

\begin{definition}\label{def-level}
We call an $R$-module $A$ \textbf{absolutely clean} if $\Ext^1_{R}(F,A)=0$ for all $R$-modules $F$ of type $FP_{\infty}$. On the other hand, we call an $R$-module $L$ \textbf{level} if $\Tor_1^R(F,L) = 0$ for all (right) $R$-modules $F$ of type $FP_{\infty}$.
\end{definition}

\subsection{Gorenstein AC-injective and Gorenstein AC-projective modules}
Based on the usual notions of Gorenstein injective and Gorenstein projective modules, the following definitions were introduced in~\cite{bravo-gillespie-hovey}. They may be thought of as Gorenstein injective (projective) but relative to the idea that the modules of type $FP_{\infty}$ are playing the role of ``finite module''.

\begin{definition}\label{def-Gorenstein AC-projective}
We call an $R$-module $M$ \textbf{Gorenstein AC-injective} if there exists an exact complex of injective modules $$\cdots \rightarrow I_1 \rightarrow I_0 \rightarrow I^0 \rightarrow I^1 \rightarrow \cdots$$ with $M = \ker{(I^0 \rightarrow I^1)}$ and which remains exact after applying $\Hom_{R}(A,-)$ for any absolutely clean $A$.
On the other hand, we call an $R$-module $M$ \textbf{Gorenstein AC-projective} if there exists an exact complex of projectives $$\cdots \rightarrow P_1 \rightarrow P_0 \rightarrow P^0 \rightarrow P^1 \rightarrow \cdots$$ with $M = \ker{(P^0 \rightarrow P^1)}$ and which remains exact after applying $\Hom_{R}(-,L)$ for any level module $L$.
\end{definition}

\subsection{Dimension shifting lemma}
We will make use of the following standard lemma.
\begin{lemma}[Dimension shifting]\label{lemma-dimension shifting}
Let $M$ and $N$ be $R$-modules and suppose we have exact sequences as below where the $P_i$ are projectives and the $I^i$ are injectives:

$$0 \xrightarrow{} \Omega^dM \xrightarrow{} P_{d-1} \xrightarrow{} \cdots \xrightarrow{} P_1 \xrightarrow{} P_0 \xrightarrow{} M \xrightarrow{} 0$$

$$0 \xrightarrow{} N \xrightarrow{} I^0 \xrightarrow{} I^1 \xrightarrow{} \cdots \xrightarrow{} I^{d-1} \xrightarrow{} \Sigma^dN \xrightarrow{} 0.$$

Then we have $\Ext^1_R(\Omega^dM , N) \cong \Ext^{d+1}_R(M , N) \cong \Ext^1_R(M , \Sigma^dN)$.
\end{lemma}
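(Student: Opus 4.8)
The plan is to prove the two isomorphisms separately by the standard dimension-shifting technique, treating the projective resolution side and the injective coresolution side symmetrically. For the first isomorphism $\Ext^1_R(\Omega^dM, N)\cong \Ext^{d+1}_R(M,N)$, I would break the given long exact sequence
$$0 \to \Omega^dM \to P_{d-1}\to \cdots \to P_0 \to M \to 0$$
into short exact sequences $0\to \Omega^{i+1}M \to P_i \to \Omega^i M\to 0$ for $i=0,\dots,d-1$ (with $\Omega^0M = M$), where $\Omega^{i+1}M$ denotes the image of $P_{i+1}\to P_i$. Applying $\Hom_R(-,N)$ to each such short exact sequence gives a long exact sequence in $\Ext$, and since each $P_i$ is projective we have $\Ext^{j}_R(P_i,N)=0$ for all $j\geq 1$. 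Therefore the connecting maps furnish isomorphisms $\Ext^{j}_R(\Omega^{i+1}M,N)\cong \Ext^{j+1}_R(\Omega^iM,N)$ for all $j\geq 1$. Composing these $d$ isomorphisms (starting from $j=1$ and the syzygy $\Omega^dM$, and shifting down to $M$) yields
$$\Ext^1_R(\Omega^dM,N)\cong \Ext^2_R(\Omega^{d-1}M,N)\cong\cdots\cong\Ext^{d+1}_R(M,N).$$

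For the second isomorphism $\Ext^{d+1}_R(M,N)\cong \Ext^1_R(M,\Sigma^dN)$, I would argue dually: split the coresolution
$$0\to N\to I^0\to\cdots\to I^{d-1}\to \Sigma^dN\to 0$$
into short exact sequences $0\to \Sigma^iN \to I^i \to \Sigma^{i+1}N\to 0$ for $i=0,\dots,d-1$ (with $\Sigma^0N = N$ and $\Sigma^{i+1}N$ the cokernel of $\Sigma^iN\to I^i$, equivalently the image of $I^i\to I^{i+1}$). Applying $\Hom_R(M,-)$ to each and using $\Ext^j_R(M,I^i)=0$ for $j\geq 1$ since $I^i$ is injective, the connecting homomorphisms give isomorphisms $\Ext^{j}_R(M,\Sigma^{i+1}N)\cong \Ext^{j+1}_R(M,\Sigma^iN)$ for $j\geq 1$. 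Composing these $d$ isomorphisms gives $\Ext^1_R(M,\Sigma^dN)\cong\Ext^{d+1}_R(M,N)$, which together with the first part completes the chain.

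Both halves are entirely routine once the short exact sequences are set up correctly; the only point requiring a little care is bookkeeping the indices so that the degrees add up to $d+1$ and that one always stays in the range $j\geq 1$ where the vanishing of $\Ext$ on projectives/injectives applies (the $d=0$ case is trivial, so one may assume $d\geq 1$). There is no real obstacle here — this is the classical dimension-shifting argument, recorded in the excerpt precisely so it can be invoked cleanly later; the mild annoyance is simply writing out the iterated composition of connecting isomorphisms without notational clutter, which I would handle by an induction on $d$ rather than displaying all intermediate terms.
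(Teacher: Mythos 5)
Your proof is correct and follows exactly the same dimension-shifting strategy the paper uses: split the resolution and coresolution into short exact sequences, apply $\Hom$, and compose the connecting isomorphisms that arise from the vanishing of higher $\Ext$ against projectives and injectives. You simply spell out the intermediate steps that the paper leaves implicit.
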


\begin{proof}
Using the sequence on the top and dimension shifting we get $$\Ext^{d+1}_R(M , N) \cong \Ext^{d}_R(\Omega^1M , N) \cong \Ext^{d-1}_R(\Omega^2M , N) \cong \cdots \cong \Ext^1_R(\Omega^dM , N).$$
On the other hand, using the bottom sequence and dimension shifting gives us $$\Ext^{d+1}_R(M , N) \cong \Ext^{d}_R(M , \Sigma^1N) \cong \Ext^{d-1}_R(M , \Sigma^2N) \cong \cdots \cong \Ext^1_R(M , \Sigma^dN).$$
\end{proof}

\section{Absolutely clean and level dimensions}\label{sec-dimensions}

In order to define AC-Gorenstein rings and prove the Fundamental Theorem~\ref{prop-dimensions}, we first introduce some new homological dimensions that can be attached to a ring $R$. These dimension are based on level and absolutely clean modules and we define them in this section.

\begin{definition}
Let $M$ be an $R$-module.  Its \emph{level dimension} $ld(M)$ is the smallest nonnegative integer $n$, if it exists, such that there is a resolution of $M$ by level modules $$0 \xrightarrow{}  L_n \xrightarrow{} \cdots \xrightarrow{} L_1 \xrightarrow{} L_0 \xrightarrow{} M \xrightarrow{} 0.$$
Similarly, its \emph{absolutely clean dimension} $ad(M)$ is the smallest nonnegative integer $n$, if it exists, such that there is a coresolution of $M$ by absolutely clean modules $$0 \xrightarrow{}  M \xrightarrow{} A^0 \xrightarrow{} A^1 \xrightarrow{} \cdots \xrightarrow{} A^n \xrightarrow{} 0.$$
In either case, if no such $n$ exists we set $ld(M)$ or $ad(M)$ equal to $\infty$.
\end{definition}

Of course the same definitions can be made for \emph{right} $R$-modules. Since injective modules are absolutely clean we have $ad(M) \leq id(M)$, and since flat modules are level we have $ld(M) \leq fd(M)$.

It was shown in~\cite[Section~2]{bravo-gillespie-hovey} that $\Ext^n_R(F,A) = 0$ for all $n >0$, whenever $F$ is type $FP_{\infty}$ and $A$ is absolutely clean. Using this the reader can prove the following lemma.

\begin{lemma}[absolutely clean dimension Lemma]\label{lemma-abs clean dimension lemma}
The following are equivalent:
\begin{enumerate}
\item $ad(M) \leq n$
\item $\Ext_R^{n+1}(F,M) = 0$ for all modules $F$ of type $FP_{\infty}$.
\item $\Ext_R^{n+i}(F,M) = 0$ for all $i\geq1$ and modules $F$ of type $FP_{\infty}$.
\item Whenever $0 \xrightarrow{}  M \xrightarrow{} A^0 \xrightarrow{} A^1 \xrightarrow{} \cdots \xrightarrow{} A^{n-1} \xrightarrow{} M^n \xrightarrow{} 0$ is exact with each $A^i$ absolutely clean, then $M^n$ is also absolutely clean.
\end{enumerate}
\end{lemma}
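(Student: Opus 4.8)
The plan is to prove the cyclic chain of implications $(1) \Rightarrow (3) \Rightarrow (2) \Rightarrow (4) \Rightarrow (1)$. The one technical ingredient, used throughout, is a form of dimension shifting adapted to coresolutions by absolutely clean modules: if $0 \xrightarrow{} M \xrightarrow{} A^0 \xrightarrow{} \cdots \xrightarrow{} A^{n-1} \xrightarrow{} M^n \xrightarrow{} 0$ is exact with each $A^i$ absolutely clean, then for every module $F$ of type $FP_\infty$ and every $k \geq 1$ one has $\Ext^{n+k}_R(F,M) \cong \Ext^{k}_R(F,M^n)$. To see this, break the exact sequence into short exact sequences $0 \xrightarrow{} C^{j-1} \xrightarrow{} A^{j-1} \xrightarrow{} C^j \xrightarrow{} 0$ with $C^0 = M$ and $C^n = M^n$, and apply the long exact sequence of $\Ext_R(F,-)$. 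Since the cited fact gives $\Ext^m_R(F,A) = 0$ for all $m > 0$ whenever $F$ is of type $FP_\infty$ and $A$ is absolutely clean, the terms $\Ext^k_R(F,A^{j-1})$ and $\Ext^{k+1}_R(F,A^{j-1})$ vanish for $k \geq 1$, so each connecting map $\Ext^k_R(F,C^j) \to \Ext^{k+1}_R(F,C^{j-1})$ is an isomorphism; composing these from $j = n$ down to $j = 1$ yields the claim.

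Granting this, $(1) \Rightarrow (3)$: choose a coresolution $0 \xrightarrow{} M \xrightarrow{} A^0 \xrightarrow{} \cdots \xrightarrow{} A^{n-1} \xrightarrow{} A^n \xrightarrow{} 0$ with all $A^i$ absolutely clean; its $n$-th cosyzygy (after using $A^0,\dots,A^{n-1}$) is exactly $A^n$, which is absolutely clean, so the shifting isomorphism gives $\Ext^{n+i}_R(F,M) \cong \Ext^i_R(F,A^n) = 0$ for all $i \geq 1$ and all $F$ of type $FP_\infty$. The implication $(3) \Rightarrow (2)$ is the special case $i = 1$. For $(2) \Rightarrow (4)$: given any exact $0 \xrightarrow{} M \xrightarrow{} A^0 \xrightarrow{} \cdots \xrightarrow{} A^{n-1} \xrightarrow{} M^n \xrightarrow{} 0$ with the $A^i$ absolutely clean, the shifting isomorphism with $k = 1$ gives $\Ext^1_R(F,M^n) \cong \Ext^{n+1}_R(F,M) = 0$ for all $F$ of type $FP_\infty$, which is precisely the statement that $M^n$ is absolutely clean.

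Finally, $(4) \Rightarrow (1)$: since $\rmod$ has enough injectives and every injective module is absolutely clean, we may build a partial coresolution $0 \xrightarrow{} M \xrightarrow{} I^0 \xrightarrow{} \cdots \xrightarrow{} I^{n-1} \xrightarrow{} M^n \xrightarrow{} 0$ with each $I^j$ injective; by hypothesis $M^n$ is absolutely clean, so this exhibits a coresolution of $M$ by absolutely clean modules of length $\leq n$, whence $ad(M) \leq n$.

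I expect the only real content to be the dimension-shifting observation of the first paragraph; once it is in place, all four equivalences are immediate. The point deserving care is exactly why one may shift along a coresolution assembled from absolutely clean modules rather than genuine injectives, and this is where the vanishing $\Ext^{>0}_R(F,A) = 0$ for $F$ of type $FP_\infty$ and $A$ absolutely clean is essential.
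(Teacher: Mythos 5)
Your proof is correct, and it follows exactly the route the paper intends: the paper leaves this lemma to the reader, citing precisely the vanishing $\Ext^m_R(F,A)=0$ for $m>0$, $F$ of type $FP_\infty$, $A$ absolutely clean, which is the fact your dimension-shifting argument along absolutely clean coresolutions rests on. The cyclic implications and the use of injectives (which are absolutely clean) for $(4)\Rightarrow(1)$ are all as expected, so nothing further is needed.
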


Similarly, it is shown in~\cite[Section~2]{bravo-gillespie-hovey} that $\Tor_n^R(F,L) = 0$ for all $n >0$, level modules $L$, and right modules $F$ of type $FP_{\infty}$. It yields the analogous lemma below.

\begin{lemma}[level dimension Lemma]\label{lemma-level dimension lemma}
The following are equivalent:
\begin{enumerate}
\item $ld(M) \leq n$
\item $\Tor^R_{n+1}(F,M) = 0$ for all right modules $F$ of type $FP_{\infty}$.
\item $\Tor^R_{n+i}(F,M) = 0$ for all $i\geq1$ and right modules $F$ of type $FP_{\infty}$.
\item Whenever $0 \xrightarrow{}  M_n \xrightarrow{} L_{n-1} \xrightarrow{} \cdots \xrightarrow{} L_1 \xrightarrow{} L_0 \xrightarrow{} M \xrightarrow{} 0$ is exact with each $L_i$ level, then $M_n$ is also level.
\end{enumerate}
\end{lemma}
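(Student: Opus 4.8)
The plan is to prove the \textbf{level dimension Lemma} by mimicking the proof of the analogous \textbf{absolutely clean dimension Lemma}, dualizing from $\Ext$ to $\Tor$ and from coresolutions to resolutions. The main input we may freely invoke is the cited fact from~\cite[Section~2]{bravo-gillespie-hovey} that $\Tor_n^R(F,L)=0$ for all $n>0$ whenever $L$ is level and $F$ is a right module of type $FP_\infty$; together with the flat analogue of dimension shifting (the $\Tor$-version of Lemma~\ref{lemma-dimension shifting}), which allows us to shift Tor-vanishing along a resolution by level modules.

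First I would establish $(1)\Rightarrow(3)$: given a resolution $0\to L_n\to\cdots\to L_0\to M\to 0$ with each $L_i$ level, break it into short exact sequences $0\to K_{i+1}\to L_i\to K_i\to 0$ (with $K_0=M$, $K_n=L_n$) and use the long exact sequence in $\Tor$ together with the cited vanishing $\Tor_{>0}^R(F,L_i)=0$ to conclude $\Tor_{n+i}^R(F,M)\cong\Tor_i^R(F,L_n)=0$ for all $i\geq 1$. The implication $(3)\Rightarrow(2)$ is trivial (take $i=1$). For $(2)\Rightarrow(4)$: given any exact sequence $0\to M_n\to L_{n-1}\to\cdots\to L_0\to M\to 0$ with the $L_i$ level, I would again chop into short exact sequences and dimension-shift to get $\Tor_1^R(F,M_n)\cong\Tor_{n+1}^R(F,M)=0$ for all right $F$ of type $FP_\infty$; by Definition~\ref{def-level} this says exactly that $M_n$ is level. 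Finally, $(4)\Rightarrow(1)$: build a partial resolution $0\to M_n\to L_{n-1}\to\cdots\to L_0\to M\to 0$ by repeatedly covering with level modules (projectives are level, so an ordinary projective resolution works to produce the syzygies), and $(4)$ forces the $n$-th syzygy $M_n$ to be level, giving the desired length-$n$ level resolution.

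The one genuine subtlety — really the only place the argument needs care rather than routine bookkeeping — is ensuring that the dimension-shifting step works with $\Tor$ at the right variable slot and that the vanishing hypothesis $\Tor_{>0}^R(F,L)=0$ is quantified over \emph{all} right modules $F$ of type $FP_\infty$, not merely finitely generated ones; this is what makes the syzygy $M_n$ actually level in the sense of Definition~\ref{def-level} and not just ``$FP_\infty$-Tor-acyclic in degree one for finitely presented test modules''. Since the cited result from~\cite{bravo-gillespie-hovey} already records the higher $\Tor$ vanishing in exactly this generality, there is no obstacle here beyond invoking it correctly. The remaining steps are formal diagram chases with long exact sequences, entirely parallel to the proof of Lemma~\ref{lemma-abs clean dimension lemma}, and I would simply remark that they are left to the reader as the statement itself suggests.
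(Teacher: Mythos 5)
Your argument is correct and is precisely the dimension-shifting proof the paper intends: the text explicitly leaves this lemma to the reader, remarking only that the cited $\Tor$-vanishing fact ``yields the analogous lemma,'' and your cycle $(1)\Rightarrow(3)\Rightarrow(2)\Rightarrow(4)\Rightarrow(1)$, chopping a level resolution into short exact sequences and walking $\Tor^R_{n+i}(F,M)$ down to $\Tor^R_i(F,-)$ of the $n$-th syzygy, supplies exactly those routine details. One small remark: the caution in your closing paragraph about quantifying over ``all right $F$ of type $FP_\infty$, not merely finitely generated ones'' is a non-issue, since modules of type $FP_\infty$ are by definition finitely generated (even finitely presented), so the dimension shift lands directly on Definition~\ref{def-level} with nothing further to verify.
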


One can use the above lemmas to easily verify the following lemma.

\begin{lemma}\label{lemma-direct summands}
We have $ad(N) \leq ad(M)$ and also $ld(N) \leq ld(M)$ whenever $N$ is a direct summand of $M$.
\end{lemma}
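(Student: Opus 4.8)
The plan is to reduce the statement to the homological characterizations of level and absolutely clean dimension already established in Lemmas~\ref{lemma-abs clean dimension lemma} and~\ref{lemma-level dimension lemma}, together with the elementary behavior of $\Ext$ and $\Tor$ on direct summands. So suppose $N$ is a direct summand of $M$, say $M \cong N \oplus N'$. The key observation is that for any $R$-module (resp.\ right $R$-module) $F$ of type $FP_\infty$ and any $i$, there are natural isomorphisms $\Ext^i_R(F,M) \cong \Ext^i_R(F,N) \oplus \Ext^i_R(F,N')$ and $\Tor_i^R(F,M) \cong \Tor_i^R(F,N) \oplus \Tor_i^R(F,N')$, since both functors are additive. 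In particular $\Ext^i_R(F,N)$ is a direct summand of $\Ext^i_R(F,M)$, and likewise for $\Tor$.

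First I would handle the absolutely clean dimension. If $ad(M) = \infty$ there is nothing to prove, so assume $ad(M) \leq n < \infty$. By the equivalence $(1)\Leftrightarrow(2)$ in Lemma~\ref{lemma-abs clean dimension lemma}, we have $\Ext^{n+1}_R(F,M) = 0$ for all $F$ of type $FP_\infty$. Since $\Ext^{n+1}_R(F,N)$ is a direct summand of $\Ext^{n+1}_R(F,M) = 0$, it too vanishes for all such $F$. Applying the reverse implication $(2)\Rightarrow(1)$ of the same lemma gives $ad(N) \leq n$, i.e.\ $ad(N) \leq ad(M)$.

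The argument for level dimension is entirely dual: if $ld(M) \leq n < \infty$, then $\Tor^R_{n+1}(F,M) = 0$ for all right modules $F$ of type $FP_\infty$ by Lemma~\ref{lemma-level dimension lemma}$(1)\Leftrightarrow(2)$; since $\Tor^R_{n+1}(F,N)$ is a direct summand of this, it vanishes as well, and the converse implication of Lemma~\ref{lemma-level dimension lemma} yields $ld(N) \leq n$. I do not anticipate any real obstacle here; the only mild subtlety is making sure one invokes the $\Ext$/$\Tor$ characterizations rather than trying to manipulate resolutions of $N$ directly (splicing a coresolution of $N$ out of one of $M$ is possible but messier, whereas the cohomological criterion makes the summand argument immediate). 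The whole proof is a two-line application of additivity of derived functors plus the cited lemmas.
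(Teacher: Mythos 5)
Your proof is correct and follows exactly the route the paper intends: the paper gives no written-out proof, remarking only that ``one can use the above lemmas to easily verify'' the statement, and your argument is precisely that verification via the $\Ext$/$\Tor$ characterizations in Lemmas~\ref{lemma-abs clean dimension lemma} and~\ref{lemma-level dimension lemma} together with additivity of these functors over the splitting $M \cong N \oplus N'$.
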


Recall that for any left or right $R$-module $M$, we can form the \emph{character module} $M^+ = \Hom_{\Z}(M,\Q)$. In the obvious way $M^+$ inherits a right (resp. left) $R$-module structure whenever $M$ is a left (resp. right) $R$-module.  It is shown in~\cite[Section~2]{bravo-gillespie-hovey} that $L$ is level if and only if $L^+$ is absolutely clean, and that $A$ is absolutely clean if and only if $A^+$ is level. We use this to prove the following.

\begin{proposition}\label{prop-character dimension}
Let $M$ be a left or right $R$-module and $M^+$ its character module. We have $ad(M) = ld(M^+)$, and  $ld(M) = ad(M^+)$.
\end{proposition}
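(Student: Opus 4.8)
The plan is to prove the two equalities $ad(M) = ld(M^+)$ and $ld(M) = ad(M^+)$ simultaneously by induction on the relevant dimension, using the characterization lemmas (Lemma~\ref{lemma-abs clean dimension lemma} and Lemma~\ref{lemma-level dimension lemma}) together with the already-cited fact that $L$ is level iff $L^+$ is absolutely clean, and $A$ is absolutely clean iff $A^+$ is level. Actually, the cleanest route is to dualize resolutions directly. Suppose first that $ld(M^+) \leq n$; I want $ad(M) \leq n$. Take any module $F$ of type $FP_\infty$; by Lemma~\ref{lemma-abs clean dimension lemma} it suffices to show $\Ext^{n+1}_R(F,M) = 0$. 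The key identity is the standard natural isomorphism relating Ext and Tor through the character module, namely $\Ext^{k}_R(F,M)^+ \cong \Tor^R_{k}(F, M^+)$ — or the variant $\Ext^k_R(F, M^+) \cong \Tor^R_k(F,M)^+$ — valid because $F$ admits a projective resolution by finitely generated projectives, so that $\Hom_R(F,-)^+ \cong -^+ \tensor_R F$ on such resolutions (finitely generated projectives commute with the relevant Hom-tensor interchange). Since $ld(M^+)\leq n$ gives $\Tor^R_{n+1}(F,M^+) = 0$ for all $F$ of type $FP_\infty$ (Lemma~\ref{lemma-level dimension lemma}), the character-module isomorphism forces $\Ext^{n+1}_R(F,M)^+ = 0$, and since $(-)^+$ is faithful (indeed $\Q$ is an injective cogenerator), $\Ext^{n+1}_R(F,M) = 0$. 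Hence $ad(M) \leq n$.

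For the reverse inequality $ad(M) \leq n \implies ld(M^+) \leq n$: if $ad(M) \leq n$ then by Lemma~\ref{lemma-abs clean dimension lemma} we have $\Ext^{n+1}_R(F,M) = 0$ for all $F$ of type $FP_\infty$; applying $(-)^+$ and the same natural isomorphism $\Tor^R_{n+1}(F,M^+) \cong \Ext^{n+1}_R(F,M)^+ = 0$, Lemma~\ref{lemma-level dimension lemma} gives $ld(M^+)\leq n$. This proves $ad(M) = ld(M^+)$. Note this argument is self-contained and does not even separately invoke the "$L$ level iff $L^+$ absolutely clean" fact, though that fact (the $n=0$ case) is of course consistent with it. Alternatively, a resolution-dualizing argument works: an absolutely clean coresolution $0 \to M \to A^0 \to \cdots \to A^n \to 0$ of length $n$ dualizes to a sequence $0 \to (A^n)^+ \to \cdots \to (A^0)^+ \to M^+ \to 0$ which is exact (since $(-)^+$ is exact) and consists of level modules, giving $ld(M^+)\leq n$; conversely a minimal-length level resolution of $M^+$ dualizes to an absolutely clean coresolution of $M^{++}$, and one uses that $M$ is a direct summand (a pure submodule, split after dualizing) of $M^{++}$ together with Lemma~\ref{lemma-direct summands} to descend to $M$. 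The Ext/Tor route avoids this double-dual maneuver and is what I would write up.

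The second equality $ld(M) = ad(M^+)$ is entirely symmetric: replace Ext by Tor and swap the roles, using $\Ext^{n+1}_R(F, M^+) \cong \Tor^R_{n+1}(F, M)^+$ for $F$ of type $FP_\infty$ — this is exactly the Hom-tensor character-module identity again, now read with $M^+$ in the Hom slot. Then $ld(M)\leq n$ iff $\Tor^R_{n+1}(F,M) = 0$ for all such $F$ (Lemma~\ref{lemma-level dimension lemma}) iff $\Tor^R_{n+1}(F,M)^+ = 0$ (faithfulness of $(-)^+$) iff $\Ext^{n+1}_R(F,M^+) = 0$ for all such $F$ iff $ad(M^+) \leq n$ (Lemma~\ref{lemma-abs clean dimension lemma}). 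Taking the infimum over $n$ in each chain of equivalences yields both displayed equalities.

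The main obstacle — really the only non-formal point — is establishing the natural isomorphism $\Ext^k_R(F,M)^+ \cong \Tor^R_k(F,M^+)$ (and its companion $\Ext^k_R(F,M^+) \cong \Tor^R_k(F,M)^+$) for $F$ of type $FP_\infty$. This is where the finiteness hypothesis on $F$ is used: one picks a projective resolution $P_\bullet \to F$ with each $P_i$ finitely generated projective, and then the classical natural isomorphism $\Hom_R(P,N)^+ \cong N^+ \tensor_R P$ for finitely generated projective $P$ (which fails for general $P$) lets one identify the cochain complex $\Hom_R(P_\bullet, M)^+$ with the chain complex $M^+ \tensor_R P_\bullet$; taking homology gives the result. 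Everything else is bookkeeping: exactness and faithfulness of $(-)^+ = \Hom_{\Z}(-,\Q)$, and the equivalences already packaged in Lemmas~\ref{lemma-abs clean dimension lemma} and~\ref{lemma-level dimension lemma}.
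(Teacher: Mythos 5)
Your main argument is correct, and it follows a genuinely different route from the paper. The paper dualizes resolutions directly: it observes that applying $(-)^+ = \Hom_{\Z}(-,\Q)$ to an absolutely clean coresolution of $M$ yields a level resolution of $M^+$ (giving $ld(M^+)\le ad(M)$), and for the reverse inequality it uses completeness of the absolutely clean cotorsion pair to produce a coresolution $0\to M\to A^0\to\cdots\to A^{n-1}\to A^n\to 0$ with $A^0,\ldots,A^{n-1}$ absolutely clean, then dualizes and invokes Lemma~\ref{lemma-level dimension lemma}(4) to force $(A^n)^+$ (hence $A^n$) into the right class. You instead run everything through the derived Hom-tensor duality $\Ext^k_R(F,M)^+\cong\Tor^R_k(M^+,F)$ for $F$ of type $FP_\infty$ (and its always-valid companion $\Tor^R_k(F,M)^+\cong\Ext^k_R(F,M^+)$), combined with the vanishing characterizations in Lemmas~\ref{lemma-abs clean dimension lemma} and~\ref{lemma-level dimension lemma} and faithfulness of $(-)^+$. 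Your route is cleaner in that it bypasses the cotorsion-pair completeness input entirely and, as you note, recovers the quoted ``$L$ level iff $L^+$ absolutely clean'' fact as the $n=0$ case; the paper's route is more elementary in that it never needs to establish the Ext/Tor character-module isomorphism. Both approaches are standard and sound.

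One small caution about your aside on the resolution-dualizing alternative: the canonical map $M\to M^{++}$ is always a pure monomorphism but is \emph{not} split in general (it is $M^+\to M^{+++}$ that always splits), so ``$M$ is a direct summand of $M^{++}$'' is false as stated, and Lemma~\ref{lemma-direct summands} would not apply directly. The paper sidesteps this exact issue by resolving $M$ itself (via completeness of the cotorsion pair) rather than passing through $M^{++}$. Since you explicitly opt for the Ext/Tor route, this does not affect your proof, but the alternative as you sketched it does have a gap.
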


\begin{proof}
We first show $ld(M^+) \leq ad(M)$. So assume $ad(M) = n < \infty$. Then by definition there is a coresolution of $M$ by absolutely clean modules $$0 \xrightarrow{}  M \xrightarrow{} A^0 \xrightarrow{} A^1 \xrightarrow{} \cdots \xrightarrow{} A^n \xrightarrow{} 0.$$ Applying $\Hom_{\Z}(-,\Q)$, we conclude that $ld(M^+) \leq n$.
On the other hand, to show $ad(M) \leq ld(M^+)$, we assume $ld(M^+) = n$, for some $n < \infty$. Then using completeness of the absolutely clean cotorsion pair, we can take a coresolution of $M$ as above where all the $A^i$ (except perhaps $A^n$) are absolutely clean. Using the absolutely clean dimension Lemma~\ref{lemma-abs clean dimension lemma} it is only left to show $A^n$ is also absolutely clean. But by applying $\Hom_{\Z}(-,\Q)$ and the level dimension Lemma~\ref{lemma-level dimension lemma}, we see that $A^{n+}$ must be level. So $A^n$ is absolutely clean.
This shows $ad(M) = ld(M^+)$, and $ld(M) = ad(M^+)$ is proved in the same way.
\end{proof}

\begin{corollary}\label{cor-double-dual-dim}
We have $ad(M) = ad(M^{++})$ and also $ld(M) = ld(M^{++})$ for any left or right $R$-module $M$.
\end{corollary}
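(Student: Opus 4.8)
The plan is to obtain this immediately from Proposition~\ref{prop-character dimension} by applying it twice, once to $M$ and once to its character module $M^+$. Recall that if $M$ is a left (resp. right) $R$-module, then $M^+$ is a right (resp. left) $R$-module and $M^{++}$ is again a left (resp. right) $R$-module; crucially, Proposition~\ref{prop-character dimension} was stated for an arbitrary left \emph{or} right $R$-module, so it is legitimate to apply it both to $M$ and to the opposite-handed module $M^+$.

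First I would establish the equality $ad(M) = ad(M^{++})$. Proposition~\ref{prop-character dimension} applied to $M$ gives $ad(M) = ld(M^+)$. Now apply the other half of Proposition~\ref{prop-character dimension} — the identity $ld(N) = ad(N^+)$ — to the module $N = M^+$; this yields $ld(M^+) = ad((M^+)^+) = ad(M^{++})$. Chaining these two identities gives $ad(M) = ld(M^+) = ad(M^{++})$, as desired.

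The identity $ld(M) = ld(M^{++})$ is proved symmetrically: Proposition~\ref{prop-character dimension} gives $ld(M) = ad(M^+)$, and then applying the identity $ad(N) = ld(N^+)$ with $N = M^+$ gives $ad(M^+) = ld(M^{++})$, so that $ld(M) = ad(M^+) = ld(M^{++})$.

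There is no real obstacle here — the statement is a formal consequence of Proposition~\ref{prop-character dimension}, and the only point requiring the slightest attention is keeping track of the left/right bookkeeping, which is harmless precisely because Proposition~\ref{prop-character dimension} was phrased to cover modules of either handedness. One could even omit a separate proof and simply remark that the corollary follows by applying Proposition~\ref{prop-character dimension} twice.
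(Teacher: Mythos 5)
Your proof is correct and is exactly the intended argument: the paper states this as an immediate corollary of Proposition~\ref{prop-character dimension} without further proof, and applying that proposition twice (once to $M$, once to $M^+$, noting it covers both left and right modules) is precisely what is meant.
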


\subsection{The global dimensions $\boldsymbol{\text{AD-lev}(R)}$ and $\boldsymbol{\text{LD-abs}(R)}$}

Recall once again that we denote the category of left $R$-modules by $R$-Mod while Mod-$R$ denotes the category of right $R$-modules.

\begin{definition}\label{def-global dims}
Let $R$ be any ring. We define the following global dimensions:
\begin{itemize}
\item $\ell \textnormal{AD-lev}(R) = \textnormal{sup}\{ad(L) \,|\, L \in \rmod \textnormal{ is level}\}$
\item $\ell \textnormal{LD-abs}(R) = \textnormal{sup}\{ld(A) \,|\, A \in \rmod \textnormal{ is absolutely clean}\}$
\item $\emph{r}\textnormal{AD-lev}(R) = \textnormal{sup}\{ad(L) \,|\, L \in \modr \textnormal{ is level}\}$
\item $\emph{r}\textnormal{LD-abs}(R) = \textnormal{sup}\{ld(A) \,|\, A \in \modr \textnormal{ is absolutely clean}\}$
\end{itemize}
We remember the notation by thinking of the first one, for example, as the \emph{\textbf{left absolutely}} clean \emph{\textbf{dimension}} among all \emph{\textbf{level}} $R$-modules.
\end{definition}

\begin{lemma}\label{lemma-global dims inequality}
Let $R$ be any ring. Then:
\begin{enumerate}
\item $\ell \textnormal{AD-lev}(R) \leq r\textnormal{LD-abs}(R)$
\item $\ell \textnormal{LD-abs}(R) \leq r\textnormal{AD-lev}(R)$
\item $r\textnormal{AD-lev}(R) \leq \ell \textnormal{LD-abs}(R)$
\item $r\textnormal{LD-abs}(R) \leq \ell \textnormal{AD-lev}(R)$
\end{enumerate}
\end{lemma}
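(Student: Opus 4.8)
The plan is to derive all four inequalities as formal consequences of Proposition~\ref{prop-character dimension} together with the duality recalled just before it: for any module $M$, the character module $M^+$ of a left module is a right module (and vice versa), and $L$ is level if and only if $L^+$ is absolutely clean, while $A$ is absolutely clean if and only if $A^+$ is level.

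For (1), I would start with an arbitrary level left $R$-module $L$. By Proposition~\ref{prop-character dimension} we have $ad(L) = ld(L^+)$. Since $L$ is level, $L^+$ is an absolutely clean right $R$-module, so $ld(L^+) \leq r\textnormal{LD-abs}(R)$ by definition of that global dimension. Hence $ad(L) \leq r\textnormal{LD-abs}(R)$, and taking the supremum over all level left $R$-modules $L$ yields $\ell\textnormal{AD-lev}(R) \leq r\textnormal{LD-abs}(R)$. For (2), I would instead take an absolutely clean left $R$-module $A$; Proposition~\ref{prop-character dimension} gives $ld(A) = ad(A^+)$, and since $A$ is absolutely clean, $A^+$ is a level right $R$-module, so $ad(A^+) \leq r\textnormal{AD-lev}(R)$. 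Taking the supremum over all such $A$ gives (2).

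Statements (3) and (4) are proved exactly as (1) and (2), respectively, with the roles of left and right $R$-modules interchanged throughout; both Proposition~\ref{prop-character dimension} and the level/absolutely clean character-module duality hold symmetrically on either side, so no new ideas are needed. The only point requiring any attention is the bookkeeping of sidedness: passing to a character module always switches a left module to a right module, which is precisely why each inequality relates an ``$\ell$'' quantity to an ``$r$'' quantity rather than two quantities on the same side. I do not expect any genuine obstacle here, as the lemma is essentially a repackaging of the character-module duality already captured in Proposition~\ref{prop-character dimension}.
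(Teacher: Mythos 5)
Your proof is correct and follows essentially the same route as the paper's: both apply Proposition~\ref{prop-character dimension} together with the fact that the character module of a level module is absolutely clean (and vice versa), then pass to suprema. The paper explicitly writes out only inequality (1) and notes the rest are similar, whereas you also spell out (2), but there is no substantive difference in method.
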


\begin{proof}
The proofs are all similar. We just show the first one. Here we have $\ell \textnormal{AD-lev}(R)$ equals $$\textnormal{sup}\{ad(L) \,|\, L \in \rmod \textnormal{ is level}\} =  \textnormal{sup}\{ld(L^+) \,|\, L \in \rmod \textnormal{ is level}\}$$ by using Proposition~\ref{prop-character dimension}. But clearly,
$$\textnormal{sup}\{ld(L^+) \,|\, L \in \rmod \textnormal{ is level}\} \leq  \textnormal{sup}\{ld(A) \,|\, A \in \modr \textnormal{ is absolutely clean}\}.$$ So we have shown $\ell \textnormal{AD-lev}(R) \leq  r\textnormal{LD-abs}(R)$.
\end{proof}

Lemma~\ref{lemma-global dims inequality} immediately gives us the next two corollaries.

\begin{corollary}\label{cor-global dim equality}
Let $R$ be any ring. Then:
\begin{enumerate}
\item $\ell \textnormal{LD-abs}(R) = r\textnormal{AD-lev}(R)$

\item $r\textnormal{LD-abs}(R) = \ell \textnormal{AD-lev}(R)$
\end{enumerate}
\end{corollary}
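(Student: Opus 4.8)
The plan is to obtain each of the two asserted equalities simply by combining two of the four inequalities supplied by Lemma~\ref{lemma-global dims inequality}, so that no genuinely new argument is needed beyond bookkeeping. For the first equality, $\ell \textnormal{LD-abs}(R) = r\textnormal{AD-lev}(R)$, I would invoke part~(2) of that lemma, which gives $\ell \textnormal{LD-abs}(R) \leq r\textnormal{AD-lev}(R)$, together with part~(3), which gives the reverse inequality $r\textnormal{AD-lev}(R) \leq \ell \textnormal{LD-abs}(R)$; the two together force equality. Symmetrically, for $r\textnormal{LD-abs}(R) = \ell \textnormal{AD-lev}(R)$ I would use part~(4), namely $r\textnormal{LD-abs}(R) \leq \ell \textnormal{AD-lev}(R)$, paired with part~(1), namely $\ell \textnormal{AD-lev}(R) \leq r\textnormal{LD-abs}(R)$.

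Since Lemma~\ref{lemma-global dims inequality} is already established, there is essentially no obstacle. The only point requiring a moment's care is the behavior of the suprema in Definition~\ref{def-global dims} in degenerate cases — for instance when the relevant class contains a module of infinite absolutely clean (or level) dimension, so that a global dimension equals $\infty$, or when taking $\textnormal{sup}$ of an empty or unbounded set — but the inequalities of Lemma~\ref{lemma-global dims inequality} are already understood for values in $\{0,1,2,\dots\}\cup\{\infty\}$, and the equalities follow with the evident conventions. It is perhaps worth recording that the coincidence expressed here is a shadow of the left--right duality provided by the character-module functor $(-)^+$ via Proposition~\ref{prop-character dimension}, which is precisely what drives the proof of Lemma~\ref{lemma-global dims inequality} in the first place.
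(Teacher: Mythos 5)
Your proposal is correct and matches the paper exactly: the paper states that Lemma~\ref{lemma-global dims inequality} ``immediately gives'' this corollary, and the intended argument is precisely the pairing of inequalities (2) with (3) and (4) with (1) that you describe. Your remark on conventions for $\infty$ and your observation that the character-module duality of Proposition~\ref{prop-character dimension} is the real engine are both apt, though neither is strictly needed beyond what the lemma already records.
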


For commutative rings we need not distinguish between left and right modules; we get $\ell \textnormal{AD-lev}(R) = r\textnormal{AD-lev}(R)$, which we simply denote by $\textnormal{AD-lev}(R)$. Similarly, we get $\ell \textnormal{LD-abs}(R) = r\textnormal{LD-abs}(R)$, which we denote $\textnormal{LD-abs}(R)$.
\begin{corollary}\label{cor-global dim commutative equality}
For any commutative ring $R$ we have
$\textnormal{AD-lev}(R) = \textnormal{LD-abs}(R)$.
\end{corollary}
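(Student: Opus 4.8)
The plan is to deduce this immediately from Corollary~\ref{cor-global dim equality}, together with the trivial observation that over a commutative ring there is no distinction between left and right modules. First I would note that for commutative $R$ the categories $\rmod$ and $\modr$ are literally the same category, and moreover the notion of a module of \emph{type $FP_{\infty}$} makes no reference to a side (it only involves finitely generated projectives and projective resolutions by such). Hence the classes of level modules and of absolutely clean modules are each unchanged under this identification, so that $\ell\textnormal{AD-lev}(R) = r\textnormal{AD-lev}(R)$ and $\ell\textnormal{LD-abs}(R) = r\textnormal{LD-abs}(R)$; these common values are exactly what the preceding paragraph has agreed to abbreviate as $\textnormal{AD-lev}(R)$ and $\textnormal{LD-abs}(R)$.

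Then I would simply invoke part (1) of Corollary~\ref{cor-global dim equality}, which states $\ell\textnormal{LD-abs}(R) = r\textnormal{AD-lev}(R)$. Rewriting the left-hand side as $\textnormal{LD-abs}(R)$ and the right-hand side as $\textnormal{AD-lev}(R)$ using the identifications above yields $\textnormal{AD-lev}(R) = \textnormal{LD-abs}(R)$, as claimed. (One could equally well appeal to part (2) of that corollary; once the left/right superscripts are dropped it gives the same statement.)

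I expect no real obstacle here: all of the substance is already carried by Lemma~\ref{lemma-global dims inequality} (whose proof in turn rests on Proposition~\ref{prop-character dimension}), and the commutative case is purely a bookkeeping remark. The only point requiring a moment's care is to confirm that ``type $FP_{\infty}$'', and therefore ``level'' and ``absolutely clean'', are genuinely insensitive to the left/right distinction for a commutative ring — but this is immediate from the definitions.
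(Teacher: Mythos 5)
Your argument is exactly what the paper has in mind: for commutative $R$ one drops the left/right distinction so the two pairs of global dimensions coincide, and then Corollary~\ref{cor-global dim equality} gives the result immediately. The paper presents this as a remark preceding the corollary rather than a formal proof, but the content is identical, including the observation that "type $FP_\infty$," "level," and "absolutely clean" are side-independent notions over a commutative ring.
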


\section{AC-Gorenstein rings}\label{sec-AC-Goren}

The purpose of this section is to define AC-Gorenstein rings and to prove the Fundamental Theorem~\ref{prop-dimensions} which characterizes their trivial $R$-modules.

\begin{definition}\label{def-AC-Goren}
We say that $R$ is an \textbf{AC-Gorenstein} ring if $$(i) \ \ \ell \textnormal{LD-abs}(R) < \infty \ \ \ \text{and} \ \ \ \emph{r}\textnormal{LD-abs}(R) < \infty.$$
Equivalently, $$(ii) \ \ell \textnormal{AD-lev}(R) < \infty \ \ \ \text{and} \ \ \  r\textnormal{AD-lev}(R) < \infty.$$
In words, (i) says there exists an integer $d < \infty$ such that any left or right absolutely clean $R$-module has level dimension at most $d$. By Corollary~\ref{cor-global dim equality} we have the  equivalent in (ii): any left or right level $R$-module has absolutely clean dimension at most $d$. By the \textbf{dimension} of such a ring we mean the least integer $d$ for which this is true.
\end{definition}

Using the results of Section~\ref{sec-dimensions}, we can now easily prove the following crucial result. It is a generalization of a well known and fundamental result for Gorenstein rings~\cite{iwanaga2} and Ding-Chen rings~\cite{ding and chen 93}.

\begin{theorem}\label{prop-dimensions}
Let $R$ be an AC-Gorenstein ring of dimension $d$. Then the following statements are equivalent for any given left or right $R$-module $M$.
\begin{enumerate}
\item $ld(M) < \infty$.
\item $ld(M) \leq d$.
\item $ad(M) < \infty$.
\item $ad(M) \leq d$.
\end{enumerate}
\end{theorem}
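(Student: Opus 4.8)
The plan is to prove the chain of implications $(2) \Rightarrow (1) \Rightarrow (3)$, $(4) \Rightarrow (3)$ trivially, and then close the loop with $(1) \Rightarrow (4)$ and $(3) \Rightarrow (2)$, so that really only two substantive implications need work: that finite level dimension forces finite absolutely clean dimension (and symmetrically), and then that each finite dimension is automatically bounded by $d$. I would set $d = \ell\textnormal{LD-abs}(R) = \ell\textnormal{AD-lev}(R) = r\textnormal{LD-abs}(R) = r\textnormal{AD-lev}(R)$ (all finite and, by Corollary~\ref{cor-global dim equality}, related as indicated) and work throughout with this single integer.

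First I would handle $(1) \Rightarrow (4)$: suppose $ld(M) = n < \infty$, so there is a resolution $0 \to L_n \to \cdots \to L_0 \to M \to 0$ by level modules. Each $L_i$ is level, hence $ad(L_i) \leq d$ by the definition of AC-Gorenstein (Definition~\ref{def-AC-Goren}). Now I would argue that a module with a finite-length resolution by modules of absolutely clean dimension $\leq d$ itself has absolutely clean dimension $\leq d$. The cleanest way is via the Ext-characterization in Lemma~\ref{lemma-abs clean dimension lemma}: it suffices to show $\Ext^{d+1}_R(F,M) = 0$ for all $F$ of type $FP_\infty$. Breaking the resolution into short exact sequences $0 \to K_{i+1} \to L_i \to K_i \to 0$ (with $K_0 = M$, $K_n = L_n$) and using the long exact sequence in $\Ext^*_R(F,-)$, together with $\Ext^{d+1}_R(F, L_i) = \Ext^{d+2}_R(F,L_i) = \cdots = 0$ (Lemma~\ref{lemma-abs clean dimension lemma}(3) applied to each $L_i$), a dimension-shifting induction on $n$ gives $\Ext^{d+1}_R(F,M) = 0$. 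Hence $ad(M) \leq d$, which is (4); and $(4) \Rightarrow (3)$ is immediate. The symmetric implication $(3) \Rightarrow (2)$ — finite absolutely clean dimension forces level dimension $\leq d$ — is proved the same way, now taking a coresolution of $M$ by absolutely clean modules, each of which has level dimension $\leq d$ by the equivalent form (ii) of Definition~\ref{def-AC-Goren}, and running the dual dimension-shifting argument with $\Tor^R_*(F,-)$ and Lemma~\ref{lemma-level dimension lemma}.

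It remains to connect the two dimensions across the dual, i.e. to see that $ld(M) < \infty$ is equivalent to $ad(M) < \infty$ and not merely that each, when finite, is $\leq d$. For this I would invoke the character module: by Proposition~\ref{prop-character dimension}, $ad(M) = ld(M^+)$ and $ld(M) = ad(M^+)$. So $(2) \Rightarrow (1)$, $(1) \Rightarrow (3)$, etc., can be threaded through $M$ and $M^+$. Concretely: assume (2), $ld(M) \leq d < \infty$; then trivially (1). Assume (1), $ld(M) = n < \infty$; by the previous paragraph $(1) \Rightarrow (4)$, so $ad(M) \leq d$, giving (3) and (4). Assume (3), $ad(M) < \infty$; then $(3) \Rightarrow (2)$ from the previous paragraph gives $ld(M) \leq d$, hence (1) and (2). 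Assume (4); then (3) is immediate and we are back in the previous case. So all four are equivalent. (The character-module input is actually only needed if one wants to start the cycle from a statement about the "opposite" dimension without having a resolution in hand; with the two dimension-shifting arguments above the cycle $(1)\Leftrightarrow(4)\Rightarrow(3)\Leftrightarrow(2)\Rightarrow(1)$ already closes, and (1)$\Leftrightarrow$(2) and (3)$\Leftrightarrow$(4) fall out.)

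The main obstacle I anticipate is purely bookkeeping rather than conceptual: making sure the dimension-shifting induction correctly tracks the bound $d$ through a resolution of arbitrary finite length $n$ — one must be careful that it is $\Ext^{d+1}_R(F,-)$ (not $\Ext^{n+d+1}_R(F,-)$) that vanishes on $M$, which is exactly what parts (3) of Lemmas~\ref{lemma-abs clean dimension lemma} and~\ref{lemma-level dimension lemma} are designed to give (vanishing of \emph{all} higher Ext/Tor past degree $d$ on a level/absolutely clean module), so the induction does not accumulate degrees. Everything else is a routine application of the machinery already assembled in Section~\ref{sec-dimensions}.
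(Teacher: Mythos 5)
Your proposal is correct and takes essentially the same route as the paper: the trivial implications $(2)\Rightarrow(1)$ and $(4)\Rightarrow(3)$, then $(1)\Rightarrow(4)$ and $(3)\Rightarrow(2)$ by bounding each term of a level resolution (resp.\ absolutely clean coresolution) by $d$ and running the $\Ext_R^{d+i}$ (resp.\ $\Tor^R_{d+i}$) dimension-shifting induction of Lemma~\ref{lemma-abs clean dimension lemma} (resp.\ Lemma~\ref{lemma-level dimension lemma}) -- this is exactly the paper's remark that the class of modules $N$ with $ad(N)\leq d$ is coresolving, and your own closing observation rightly notes the character-module detour is unnecessary. One minor inaccuracy: Corollary~\ref{cor-global dim equality} identifies the four global dimensions only in crossed pairs, not all four as equal, but since your argument only uses the bounds $ad(L)\leq d$ and $ld(A)\leq d$ coming from Definition~\ref{def-AC-Goren}, nothing is affected.
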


\begin{proof}
Note that (2) $\implies$ (1), and (4) $\implies$ (3) are trivial.

To show (1) $\implies$ (4), suppose we have an exact sequence
$$0 \xrightarrow{}  L_n \xrightarrow{} \cdots \xrightarrow{} L_1 \xrightarrow{} L_0 \xrightarrow{} M \xrightarrow{} 0$$ with each $L_i$ a level $R$-module. Since $R$ is AC-Gorenstein of dimension $d$, each $L_i$ has $ad(L_i) \leq d$. But it is easy to see, using the $\Ext_R^{d+i}$ characterization of Lemma~\ref{lemma-abs clean dimension lemma}, that the class of all modules $N$ with $ad(N) \leq d$, is a coresolving class. So we get $ad(M) \leq d$ too. This proves (1) $\implies$ (4), and (3) $\implies$ (2) can be proved similarly.
\end{proof}

\begin{definition}\label{def-trivial}
Let $R$ be an AC-Gorenstein ring of dimension $d$. We call an $R$-module \textbf{trivial} if is satisfies the equivalent conditions in Proposition~\ref{prop-dimensions}. We will denote the class of all trivial modules by $\class{W}$.
\end{definition}

The class $\class{W}$ will indeed turn out to be the class of \emph{trivial objects} in the two abelian model structures we construct on $R$-Mod. Recall that the axioms for an abelian model structure require that the class $\class{W}$ of trivial objects be a \textbf{thick} class. This means that $\class{W}$ must be closed under direct summands (retracts) and satisfy the two out of three property on short exact sequences.

\begin{corollary}\label{cor-trivial}
Let $R$ be an AC-Gorenstein ring of dimension $d$. Then the class $\class{W}$ of all trivial $R$-modules satisfies each of the following.
\begin{enumerate}
\item $\class{W}$ contains all projective and injective modules.
\item $\class{W}$ is a thick class.
\item $\class{W}$ is closed under direct products, direct sums, and in fact direct limits.
\item $\class{W}$ is closed under transfinite extensions.
\end{enumerate}
\end{corollary}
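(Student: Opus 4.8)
The plan is to prove each of the four items by combining Theorem~\ref{prop-dimensions} with the characterizations of $ad$ and $ld$ provided by Lemmas~\ref{lemma-abs clean dimension lemma} and~\ref{lemma-level dimension lemma}, together with the character-module duality of Proposition~\ref{prop-character dimension} and Corollary~\ref{cor-double-dual-dim}. The unifying idea is that $M \in \class{W}$ if and only if $ad(M) \leq d$ (equivalently $ld(M) \leq d$), and each of these two conditions is detected by the vanishing of an $\Ext$ or $\Tor$ functor against the fixed set of modules of type $FP_\infty$; such vanishing conditions are automatically inherited by the various limits and extensions in question.

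\medskip

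For (1), any projective module $P$ has $ld(P)=0 \leq d$ (projectives are flat, hence level), so $P \in \class{W}$; dually any injective module $I$ has $ad(I)=0\leq d$, so $I \in \class{W}$. For (2), I would argue that $\class{W}$ is closed under retracts by Lemma~\ref{lemma-direct summands}, and that it satisfies two-out-of-three on short exact sequences: given $0 \to A \to B \to C \to 0$, the long exact sequence for $\Ext^{d+i}_R(F,-)$ against $F$ of type $FP_\infty$ shows that if two of $A,B,C$ have $ad \leq d$ (equivalently the relevant $\Ext^{d+1}_R(F,-)$ and $\Ext^{d+2}_R(F,-)$ vanish, using part (3) of Lemma~\ref{lemma-abs clean dimension lemma}) then so does the third. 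Here one uses that $\Ext^{d+i}_R(F,-)=0$ for $i \geq 1$ is the operative condition, so that the standard two-out-of-three dance on a long exact sequence goes through without boundary issues.

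\medskip

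For (3), closure under arbitrary direct sums follows since $\Ext^{d+1}_R(F,-)$ commutes with direct sums when $F$ is of type $FP_\infty$ (such $F$ is, in particular, finitely presented, indeed admits a resolution by finitely generated projectives, so $\Ext$ with first argument $F$ commutes with direct sums in all degrees); hence $ad(\bigoplus M_j) \leq d$ whenever each $ad(M_j)\leq d$. Dually, closure under direct products follows from the fact that $\Tor^R_{d+1}(F,-)$ against $F$ of type $FP_\infty$ commutes with products (level modules are closed under products, and more precisely the $\Tor$-vanishing characterization of $ld \leq d$ in Lemma~\ref{lemma-level dimension lemma} is preserved by products since $F$ has a resolution by finitely generated projectives), giving $ld(\prod M_j)\leq d$. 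For direct limits I would use that $\class{W}$ is the class of modules of finite level dimension bounded by $d$, and that $ld$ behaves well under direct limits: $\Tor^R_{d+1}(F,-)$ commutes with direct limits for any $F$ (not just finite ones), so a direct limit of modules with $ld \leq d$ again has $ld \leq d$. Finally for (4), closure under transfinite extensions is Eklof's lemma: since $\class{W} = \leftperp{(\class{W}')}$ is not quite the right framing, I would instead note that $\class{W}$ is characterized as $\{M : \Ext^{d+1}_R(F,M)=0 \text{ for all } F \text{ of type } FP_\infty\}$, and use the dimension-shifting Lemma~\ref{lemma-dimension shifting} together with Lemma~\ref{lemma-abs clean dimension lemma}(4) to reduce to showing the coresolving class $\{N : ad(N)\leq d\}$, equivalently a right-$\Ext^1$-orthogonal class after dimension shift, is closed under transfinite extensions — this is exactly the setting of the Eklof lemma applied to the syzygy module, so transfinite extensions of such modules are again such.

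\medskip

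The main obstacle I anticipate is item (3), specifically the closure under \emph{direct products}: unlike direct sums and direct limits, products do not interact well with $\Ext$, so the naive approach via $ad$ fails and one genuinely needs to pass to the $ld$-characterization and invoke that level modules (and finite-level-dimension modules) are product-closed over an arbitrary ring — this is the one place where the whole dual theory built in Section~\ref{sec-dimensions}, rather than a one-line homological argument, is really being used. Everything else (items (1), (2), direct sums and direct limits in (3), and the Eklof argument in (4)) should be a routine assembly of the lemmas already in hand.
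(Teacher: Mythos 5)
Your overall blueprint (characterize $\class{W}$ by vanishing of $\Ext^{d+i}_R(F,-)$ or $\Tor^R_{d+i}(F,-)$ and inherit that through limits and extensions) is the right one and matches the paper, but there are two genuine errors and one misplaced worry.

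For item (2), the claim that the two-out-of-three argument ``goes through without boundary issues'' using only $\Ext^{d+i}_R(F,-)$ is incorrect. The problematic case is the kernel of an epimorphism: given $0 \to A \to B \to C \to 0$ with $B,C \in \class{W}$, the long exact sequence gives $\Ext^{d}_R(F,C) \to \Ext^{d+1}_R(F,A) \to \Ext^{d+1}_R(F,B)=0$, and since $\Ext^{d}_R(F,C)$ has no reason to vanish, one only obtains $\Ext^{d+i}_R(F,A)=0$ for $i\geq 2$, i.e.\ $ad(A)\leq d+1$. The paper avoids this by switching to $\Tor^R_{d+i}(F,-)$ for exactly this case (closure under kernels of epimorphisms), while $\Ext$ handles extensions and cokernels of monomorphisms. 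Your version can be rescued by first concluding $ad(A) \leq d+1 < \infty$ and then invoking Theorem~\ref{prop-dimensions} to downgrade to $ad(A)\leq d$, but you need to say that; it does not follow from the long exact sequence alone.

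For item (3), your worry about direct products is unfounded: $\Ext^n_R(F,\prod_j M_j)\cong\prod_j\Ext^n_R(F,M_j)$ for \emph{any} $F$, because $\Hom$ takes products in the second variable to products and products are exact in $R$-Mod, so cohomology commutes with them. Thus the naive $ad$-based argument is exactly what the paper uses for products. Your alternative route using $\Tor^R_{d+1}(F,-)$ and finite presentation of all the syzygies of $F$ is also valid (and in fact the paper notes this duality parenthetically), but it is an optional extra, not a forced detour.

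For item (4), the appeal to the Eklof lemma is wrong in a way that matters. Eklof's lemma says that a \emph{left} orthogonal $\leftperp{\class{Y}}$ is closed under transfinite extensions; but $\class{W} = \{M : ad(M)\leq d\}$ is, after dimension shifting, a \emph{right} orthogonal $\rightperp{\{\Omega^d F\}}$, and right orthogonals are not in general closed under transfinite extensions (think of the class of injectives). The correct argument, which the paper uses, is elementary: $\class{W}$ has already been shown closed under extensions and under direct limits, and any transfinite extension is, by transfinite induction on its filtration, a direct limit of iterated finite extensions; hence it lies in $\class{W}$. Replace the Eklof step with this induction.
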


\begin{proof}
Obviously $\class{W}$ contains all projective and injective modules. Condition (2) of Lemma~\ref{lemma-abs clean dimension lemma}, or Lemma~\ref{lemma-level dimension lemma}, imply that $\class{W}$ is closed under direct summands and extensions. Now given a monomorphism $M \hookrightarrow N$ with $M, N \in \class{W}$, the induced long exact sequence in $\Ext_R^{d+i}(F,-)$ shows that its cokernel is in $\class{W}$ too. On the other hand, $\Tor^R_{d+i}(F,-)$ can be used to show  $\class{W}$ contains any kernel of any epimorphism between objects in $\class{W}$. So $\class{W}$ is a thick class.

To prove (3), use that the functor $\Ext_R^{d+1}(F,-)$ commutes with direct products while the functor $\Tor^R_{d+1}(F,-)$ commutes with direct sums and direct limits. (In fact, since $F$ is of type $FP_{\infty}$    the functor $\Tor^R_{d+1}(F,-)$ also commutes with direct products while the functor $\Ext_R^{d+1}(F,-)$ will commute with direct sums and direct limits!)

Finally, since $\class{W}$ is closed under extensions and direct limits it is closed under transfinite extensions too.
\end{proof}

\section{Examples of AC-Gorenstein rings and $n$-coherence}\label{sec-examples}

In this section we give some examples of AC-Gorenstein rings and describe a nice connection to the $n$-coherent rings recently studied by Bravo and P\'erez~\cite{bravo-perez}. We also give some particular attention to the example of graded $R[x]/(x^2)$-modules, extending some ideas from~\cite{gillespie-hovey-graded-gorenstein}.

\subsection{Iwanaga-Gorenstein rings}
Let $R$ be a (left and right) Noetherian ring. Then $R$ is AC-Gorenstein if and only if it is Gorenstein in the sense of Iwanaga~\cite{iwanaga} and~\cite{iwanaga2}. Indeed,  in the Noetherian case, level modules coincide with flat modules and absolutely clean modules coincide with injective modules.

Frobenius rings are Gorenstein and of course any Noetherian ring of finite global dimension is Gorenstein. If $R$ is a commutative Gorenstein ring and $G$ is a finite group, then the group ring $R[G]$ is Gorenstein.

\subsection{Ding-Chen rings}
Let $R$ be a (left and right) coherent ring. Then $R$ is AC-Gorenstein if and only if it is Ding-Chen in the sense of~\cite{gillespie-ding}. These are the coherent analog to Gorenstein rings and were introduced in~\cite{ding and chen 93} and~\cite{ding and chen 96}. For coherent rings, level modules coincides with flat modules and absolutely clean modules coincides with absolutely pure (FP-injective) modules.

Examples of Ding-Chen rings include all Gorenstein rings, and the group ring $R[G]$ over a finite group is Ding-Chen whenever $R$ is a commutative Ding-Chen ring. Any von-Neumann regular ring is also a Ding-Chen ring. In particular, if $R$ is an infinite product of fields, then $R$ is a Ding-Chen ring. Furthermore it follows from Theorem~7.3.1 of~\cite{glaz} that $R[x_1,x_2,x_3, \cdots , x_n]$ is a commutative Ding-Chen ring. Another example of a Ding-Chen ring is the group ring $R[G]$ where $R$ is an FC-ring (Ding-Chen of dimension 0) and $G$ is a locally finite group. See~\cite{damiano}.

\subsection{$\boldsymbol{n}$-coherent AC-Gorenstein rings}\label{subsec-n-coherent}
The notion of $n$-coherent rings were nicely studied in~\cite{bravo-perez}. Briefly, a ring $R$ is \emph{(left) $n$-coherent} ($0 \leq n \leq \infty$) if and only if the class of finitely $n$-presented (left) modules is a thick class. Here a module $F$ is \emph{finitely $n$-presented} if there exists an exact sequence
$$P_n \xrightarrow{} P_{n-1} \xrightarrow{} \cdots \xrightarrow {} P_2  \xrightarrow{} P_1  \xrightarrow{} P_0  \xrightarrow{} F  \xrightarrow{} 0$$ with each $P_i$ a finitely generated projective.
It turns out that a ring is (left) 0-coherent if and only if it is (left) Noetherian and it is (left) 1-coherent if and only if it is (left) coherent in the usual sense. As $n$ increases the $n$-coherent rings become more wild, but every ring is at least $\infty$-coherent because the modules of type $FP_{\infty}$ always form a thick class. Thus every ring can be classified as $n$-coherent for some $0 \leq n \leq \infty$.

Using this language, a Gorenstein ring is precisely a 0-coherent AC-Gorenstein ring, while a Ding-Chen ring is precisely a 1-coherent AC-Gorenstein ring. By an \textbf{$\boldsymbol{n}$-coherent AC-Gorenstein ring} we mean exactly what these words say, ``$n$-coherent'' is simply being used as an adjective.

In~\cite{Goren-n-coherent-rings}, the authors introduced a similar but different notion for some non-coherent rings, which they called \emph{Gorenstein $n$-coherent}. These rings are certainly $n$-coherent AC-Gorenstein rings  in our sense. It seems extremely unlikely however that the converse is true.

\subsection{Rings with finite weak dimension}\label{subsec-finite level dimension}
Let $R$ be a ring and $M$ be an $R$-module. Since projective modules are flat and flat modules are level, we have $ld(M) \leq fd(M) \leq pd(M)$. So any ring of finite (left and right) global dimension, or just finite weak dimension, must be AC-Gorenstein.

But of course the same is true for any ring having finite (left and right) global level dimension. As a particular example, let $k$ be a field and $R: = k[x_1,x_2,\cdots]/m^2$ be the quotient of the polynomial ring in infinitely many variables by the square of the maximal ideal $m = (x_1,x_2,\cdots)$. It is shown in~\cite[Example~2]{bravo-perez} that $R$ is 2-coherent and it follows from~\cite[Prop.~2.5]{bravo-gillespie-hovey} that every $R$-module is level. So $R$ is an example of a 2-coherent AC-Gorenstein ring of dimension 0.

\subsection{Graded $\boldsymbol{R[x]/(x^2)}$-modules}\label{subsec-chain complexes}
Let $R$ be a ring and consider the ring $A = R[x]/(x^2)$. Following ideas in~\cite{gillespie-hovey-graded-gorenstein}, we think of $A$ as a $\Z$-graded ring with a copy of $R$, generated by $1_R$, in degree 0, and another copy of $R$, generated by $x$, in degree $-1$.
We let $A$-Mod denote the category of graded left $A$-modules and Mod-$A$ the category of graded right $A$-modules. One can check that the category $A$-Mod is isomorphic to the category
$\ch$, of unbounded chain complexes of (left) $R$-modules. Indeed multiplication by $x$ corresponds to the differential $d$. Everything discussed in this paper has an analog in $A$-Mod and we have the following proposition and corollary.

\begin{proposition}\label{prop-chain complexes}
Let $X$ be a (left) $A$-module, thought of as a chain complex of (left) $R$-modules.
\begin{enumerate}
\item $X$ has finite level dimension, that is, $ld(X) \leq n < \infty$, if and only if $X$ is an exact chain complex with each cycle $Z_iX$ a (left) $R$-module of finite level dimension $ld(Z_iX) \leq n$.
\item $X$ has finite absolutely clean dimension, that is, $ad(X) \leq n < \infty$, if and only if $X$ is an exact chain complex with each cycle $Z_iX$ a (left) $R$-module of finite absolutely clean dimension $ad(Z_iX) \leq n$.
\item $X$ is Gorenstein AC-projective if and only if each component $X_n$ is Gorenstein AC-projective and any chain map $X \xrightarrow{} L$ is null homotopic whenever $L$ is a level chain complex.
\item $X$ is Gorenstein AC-injective if and only if each component $X_n$ is Gorenstein AC-injective and any chain map $A \xrightarrow{} X$ is null homotopic whenever $A$ is an absolutely clean chain complex.
\end{enumerate}
\end{proposition}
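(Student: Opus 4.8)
The plan is to carry everything out in the abelian category $A\textnormal{-Mod}\cong\ch$ and reduce each item to the corresponding module-theoretic statement over the ring $A$. The dictionary we use is standard: the projective $A$-modules are exactly the contractible complexes of projective $R$-modules (direct sums of disks $D_p(Q)$ with $Q$ a projective $R$-module), the injective $A$-modules are the contractible complexes of injective $R$-modules, the evaluation functors $\textnormal{ev}_p\colon\ch\to\rmod$ are exact and preserve projectives and injectives, and there are disk adjunctions $\Hom_{\ch}(D_p(M),Y)\cong\Hom_R(M,Y_p)$ and $\Hom_{\ch}(Y,D_p(M))\cong\Hom_R(Y_{p-1},M)$. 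The one genuinely new ingredient is the promised analog of Section~\ref{sec-dimensions}: \emph{an $A$-module $L$ is level if and only if $L$ is an exact complex with each cycle $Z_pL$ a level $R$-module, and an $A$-module $B$ is absolutely clean if and only if $B$ is an exact complex with each cycle $Z_pB$ an absolutely clean $R$-module.} I would establish this first. Since $A$ is $R$-free of rank two with $x$ central, restriction along $R\to A$ sends finitely generated projective $A$-modules, hence modules of type $FP_\infty$ and flat resolutions, to their $R$-analogs; so $\Tor^A$ of a complex against a right $A$-module of type $FP_\infty$ is computed by level $\Tor^R$ of its components, and the characterizations follow (equivalently, transfer across character modules, using that $(Z_pX)^+$ is a cycle of $X^+$). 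In particular a ``level chain complex'' as in (3) is exactly a level $A$-module, and an ``absolutely clean chain complex'' as in (4) is exactly an absolutely clean $A$-module.

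Granting this, (1) and (2) are formal. For (1): if $ld_A(X)\le n$, choose an exact sequence $0\to L_n\to\cdots\to L_0\to X\to 0$ of $A$-modules with each $L_j$ level; each $L_j$ is then an exact complex, so $X$ is acyclic, being an iterated cokernel of monomorphisms between acyclic complexes, and applying the functor $Z_p(-)$ — which is exact on the subcategory of acyclic complexes — yields for each $p$ an exact sequence $0\to Z_pL_n\to\cdots\to Z_pL_0\to Z_pX\to 0$ of level $R$-modules, whence $ld_R(Z_pX)\le n$ by Lemma~\ref{lemma-level dimension lemma}. Conversely, if $X$ is exact with $ld_R(Z_pX)\le n$ for all $p$, induct on $n$: the case $n=0$ is the structural lemma, and for $n\ge1$ use that the level $A$-modules form the left class of a complete cotorsion pair over $A$ (the chain-complex analog of~\cite{bravo-gillespie-hovey}) to write $0\to K\to L^0\to X\to 0$ with $L^0$ a level $A$-module; then $ld_A(K)\le n-1$ by $\Tor^A$-dimension shifting (Lemma~\ref{lemma-level dimension lemma}), and splicing an $(n-1)$-step level $A$-resolution of $K$ onto this sequence gives one of length $n$ for $X$. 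Part (2) is identical using ``absolutely clean,'' Lemma~\ref{lemma-abs clean dimension lemma}, and the absolutely clean cotorsion pair.

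For (3), suppose first that $X$ is Gorenstein AC-projective over $A$, witnessed by an exact complex $\mathbf P=(\cdots\to P_1\to P_0\to P^0\to P^1\to\cdots)$ of projective $A$-modules with $X=\ker(P^0\to P^1)$, remaining exact after $\Hom_A(-,L)$ for every level $A$-module $L$. Applying $\textnormal{ev}_p$ gives an exact complex $(\mathbf P)_p$ of projective $R$-modules with cycle $X_p$; for a level $R$-module $L'$, the disk $D_{p+1}(L')$ is exact with level cycles, hence a level $A$-module, so $\Hom_A(\mathbf P,D_{p+1}(L'))\cong\Hom_R((\mathbf P)_p,L')$ is exact, and thus $X_p$ is Gorenstein AC-projective over $R$ for each $p$. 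For the homotopy condition, let $f\colon X\to L$ with $L$ a level complex; the cycle $Z^1:=\im(P^0\to P^1)$ inherits from the exactness of $\mathbf P$ and of $\Hom_A(\mathbf P,L)$ a projective $A$-resolution that stays exact under $\Hom_A(-,L)$, so $\Ext^1_A(Z^1,L)=0$, whence $f$ extends along the monomorphism $X\hookrightarrow P^0$ to a chain map $P^0\to L$; the latter is null homotopic because $P^0$ is a contractible complex, hence so is $f$. Conversely, assume each $X_p$ is Gorenstein AC-projective over $R$ and every chain map from $X$ into a level complex is null homotopic. I would build the required totally acyclic complex of projective $A$-modules one step at a time, maintaining the invariant that the current cycle has Gorenstein AC-projective $R$-components and kills all chain maps into level complexes: embed the current cycle $X'$ into the contractible complex $\mathrm{cone}(\mathrm{id}_{X'})$ and then into a direct sum of disks $D_p(Q_p)$ with each $X'_p\hookrightarrow Q_p$ a monomorphism into a projective $R$-module with Gorenstein AC-projective cokernel (such $Q_p$ exist since $X'_p$ is a cycle of a totally acyclic $R$-complex), and dually on the left; the new cycles again have Gorenstein AC-projective components, and one verifies, using the null-homotopy hypothesis, that $\mathbf P$ stays exact after $\Hom_A(-,L)$. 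Part (4) is dual throughout: swap projectives for injectives, level for absolutely clean, $\Hom_A(-,L)$ for $\Hom_A(B,-)$, and ``$X\to L$ null homotopic'' for ``$B\to X$ null homotopic.''

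The main obstacle is exactly the converse of (3) (and dually of (4)): assembling the component resolutions into one totally acyclic complex of \emph{projective $A$-modules}, and showing that the hypothesis ``every chain map $X\to L$ into a level complex is null homotopic'' is precisely what keeps $\Hom_A(-,L)$ exact at each new spot and propagates the invariant to the successive cycles. I expect the cleanest approach is to run this through the complete cotorsion pair whose left class is the Gorenstein AC-projective $A$-modules (the chain-complex analog of the machinery of~\cite{bravo-gillespie-hovey}, in the spirit of~\cite{gillespie-hovey-graded-gorenstein}): show that the class described on the right-hand side of (3) is closed under the operations needed to identify it with that left class — and that closure is the technical heart of the matter.
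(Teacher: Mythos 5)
Your handling of (1) and (2) tracks the paper's: both rely on the structural result from~\cite{bravo-gillespie} that level (resp.\ absolutely clean) chain complexes are precisely the exact complexes with level (resp.\ absolutely clean) cycles, and then reduce to Lemma~\ref{lemma-level dimension lemma} and Lemma~\ref{lemma-abs clean dimension lemma}; you spell out the inductive argument for the converse that the paper compresses to a one-line citation of resolutions and part~(4) of those lemmas. One caution on your preliminary structural claim: the assertion that ``$\Tor^A$ of a complex against a right $A$-module of type $FP_\infty$ is computed by level $\Tor^R$ of its components'' is not right as stated --- restricting a projective $A$-resolution of an $FP_\infty$ $A$-module along $R \to A$ does not yield a projective $R$-resolution of a single $R$-module --- and in any case the paper, rightly, just cites~\cite{bravo-gillespie} for this characterization rather than reproving it.

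Your forward implications of (3) and (4) are sound: evaluating a totally acyclic complex of projective (resp.\ injective) $A$-modules at degree $p$ and pairing against disks as level (resp.\ absolutely clean) $A$-modules gives componentwise Gorenstein AC-projectivity (resp.\ injectivity), and the lifting through $\Ext^1_A(Z^1, L) = 0$ together with contractibility of $P^0$ handles the homotopy condition correctly.

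The genuine gap is exactly the one you flag: the converse implications of (3) and (4). Your proposed construction --- embed the current cycle $X'$ into a contractible complex $D$ of projectives with componentwise Gorenstein AC-projective cokernel, and iterate --- does not obviously propagate the invariant. Concretely, if $h \colon X'' \to L$ is a chain map from the new cycle $X'' = \cok(X' \hookrightarrow D)$ to a level complex, then $h\pi \colon D \to L$ is null homotopic since $D$ is contractible, but the contracting homotopy on $D$ need not vanish on $X'$ and so need not descend to $X''$; and for $\Hom_A(\mathbf P, L)$ to be exact at $\Hom_A(P^0, L)$ you need $\Ext^1_A$-vanishing for the cokernel cycle, which is again not an immediate consequence of the hypothesis on $X$ alone. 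The paper sidesteps all of this by citing~\cite{gillespie-AC-proj-complexes} for (3) and~\cite{bravo-gillespie} for (4); those papers prove the characterizations via substantial cotorsion-pair machinery (the direction you correctly guess at the end), not by a naive cycle-by-cycle construction. So the proposal, while structurally well aligned with the paper and correct where it is complete, leaves the hard halves of (3) and (4) open.
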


\begin{proof}
It was shown in~\cite{bravo-gillespie} that the level chain complexes are precisely the exact chain complexes whose each cycle module $Z_iX$ is level. The analogous result was also shown for the absolutely clean complexes. Now if $ld(X) \leq n < \infty$, then $X$ has a resolution by level chain complexes $$0 \xrightarrow{}  L_n \xrightarrow{} \cdots \xrightarrow{} L_1 \xrightarrow{} L_0 \xrightarrow{} X \xrightarrow{} 0.$$ It follows that $X$ must be exact and that we have an exact sequence of cycles
$$0 \xrightarrow{} Z_i(L_n) \xrightarrow{} Z_i(L_{n-1}) \xrightarrow{} \cdots \xrightarrow{}
Z_i(L_1) \xrightarrow{} Z_i(L_0) \xrightarrow{} Z_i X \xrightarrow{} 0.$$ So $ld(Z_iX) \leq n$.

It was shown in~\cite{bravo-gillespie} that the class of level modules is resolving and that level resolutions of complexes exist. So using part~(4) of the level dimension Lemma~\ref{lemma-level dimension lemma} we can also prove the converse. The case of absolutely clean dimension is similar.

Statement (4) is proved in~\cite{bravo-gillespie} and (3) is prove in~\cite{gillespie-AC-proj-complexes}.
\end{proof}

\begin{corollary}\label{cor-graded AC-Goren}
Let $R$ be a ring, and $A = R[x]/(x^2)$ the associated graded ring.
\begin{enumerate}
\item If $R$ is (left) $n$-coherent, then the graded ring $A = R[x]/(x^2)$ is also (left) $n$-coherent.

\item If $R$ is AC-Gorenstein of dimension $d$, then the graded ring $A = R[x]/(x^2)$ is also AC-Gorenstein of dimension $d$.

\item Whenever the graded ring $A$ is AC-Gorenstein of dimension $d$, the trivial (left) $A$-modules are precisely the exact (left) chain complexes $X$ having each cycle module satisfying  $ld(Z_iX) \leq d$ and/or $ad(Z_iX) \leq d$.

\item Whenever the graded ring $A$ is AC-Gorenstein of dimension $d$, there always exist nontrivial $A$-modules. That is, chain complexes $X$ for which $ld(X) = \infty$ and $ad(X) = \infty$. Indeed any non exact chain complex will do.

\item If $R$ has infinite (left and right) global level dimension, then there even exists a nontrivial $A$-module that is exact.

\end{enumerate}
\end{corollary}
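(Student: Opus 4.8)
The plan is to transport everything through the isomorphism $A\textnormal{-Mod}\cong\ch$ and Proposition~\ref{prop-chain complexes}. Throughout I write $S^k(M)$ for the stalk complex (the $A$-module with $M$ in degree $k$ and $x$ acting as $0$) and $D^k(M)$ for the disk complex $\cdots 0\to M\xrightarrow{\mathrm{id}}M\to 0\cdots$ with $M$ in degrees $k$ and $k-1$; this is the $A$-module freely generated by $M$ placed in degree $k$, it is exact, and $Z_iD^k(M)\cong M$ when $i=k-1$ while $Z_iD^k(M)=0$ otherwise. Parts (2)--(5) will follow fairly formally from Proposition~\ref{prop-chain complexes}, Theorem~\ref{prop-dimensions} and Definition~\ref{def-trivial}; the real content is part (1), which needs a description of the finitely $n$-presented $A$-modules.

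For (1), I would first establish the following: \emph{a graded $A$-module $F$ is finitely $n$-presented over $A$ if and only if, viewed as a chain complex, $F$ is bounded and each component $F_i$ is a finitely $n$-presented $R$-module.} For the forward direction, a finitely generated $A$-module is supported in finitely many degrees (a module generated in degrees $d_1,\dots,d_m$ is concentrated between $\min_i d_i-1$ and $\max_i d_i$, since $x$ has degree $-1$ and $x^2=0$), and restriction along the ring map $R\hookrightarrow A$ --- which exhibits $A$ as a finitely generated free, hence $FP_\infty$, left and right $R$-module --- sends finitely $n$-presented $A$-modules to finitely $n$-presented $R$-modules; as the $FP_n$ modules are closed under direct summands, each $F_i$ (a summand of $F|_R=\bigoplus_i F_i$) is then finitely $n$-presented over $R$. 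For the converse I would induct on the number of nonzero components of $F$ using the short exact sequences of $A$-modules $0\to F_{<m}\to F\to F_{\ge m}\to 0$ furnished by brutal truncation (a subcomplex because $d$ lowers degree), together with closure of the $FP_n$ modules under extensions; the base case $F=S^k(M)$ with $M$ finitely $n$-presented over $R$ is handled by splicing the short exact sequences $0\to S^{k-1}(M)\to D^k(M)\to S^k(M)\to 0$ and then replacing each disk $D^{k-j}(M)$ by $A\otimes_R P_\bullet$ (suitably shifted), where $P_\bullet\to M$ is a resolution of $M$ by finitely generated projective $R$-modules, i.e.\ passing to the total complex of a double complex of $A$-modules that is finitely generated projective in every total degree $\le n$. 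Granting this description, if the finitely $n$-presented left $R$-modules form a thick class then so do the finitely $n$-presented left $A$-modules: these are exactly the bounded complexes with finitely $n$-presented components, and each closure property required of a thick class --- direct summands, extensions, kernels of epimorphisms, cokernels of monomorphisms --- can be checked degreewise, boundedness being preserved in every case. Hence $A$ is left $n$-coherent. I expect the stalk-complex base case, in particular the bookkeeping that keeps the projective $A$-modules finitely generated up to degree $n$, to be the main obstacle.

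For (2), an absolutely clean $A$-module $Y$ is, by Proposition~\ref{prop-chain complexes}(2) applied with $n=0$, an exact complex each of whose cycles $Z_iY$ is an absolutely clean $R$-module; since $R$ is AC-Gorenstein of dimension $d$ this gives $ld(Z_iY)\le d$ for all $i$, whence $ld(Y)\le d$ by Proposition~\ref{prop-chain complexes}(1), and symmetrically for right $A$-modules. Thus every left or right absolutely clean $A$-module has level dimension at most $d$, so $A$ is AC-Gorenstein of dimension at most $d$. For the reverse inequality, possibly after interchanging left and right, I would pick an absolutely clean $R$-module $N$ with $ld(N)=d$ (this supremum is attained, being a finite supremum of level dimensions of absolutely clean modules) and set $X=\bigoplus_{k\in\Z}D^k(N)$; then $X$ is exact with every cycle isomorphic to $N$, so by Proposition~\ref{prop-chain complexes}(2) it is an absolutely clean $A$-module, and by Proposition~\ref{prop-chain complexes}(1) it satisfies $ld(X)=d$. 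Hence $A$ has dimension exactly $d$.

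For (3): by Definition~\ref{def-trivial} and Theorem~\ref{prop-dimensions}, an $A$-module $X$ is trivial if and only if $ld(X)\le d$ if and only if $ad(X)\le d$, and Proposition~\ref{prop-chain complexes}(1) and (2) rewrite these as ``$X$ exact with $ld(Z_iX)\le d$ for all $i$'' and ``$X$ exact with $ad(Z_iX)\le d$ for all $i$'' respectively; these conditions are therefore equivalent to one another, which is the asserted ``and/or'' characterization. For (4): Proposition~\ref{prop-chain complexes}(1),(2) already show that every trivial $A$-module is exact, so any non-exact chain complex --- for instance the stalk $S^0(R)$ --- is nontrivial, and Theorem~\ref{prop-dimensions} then forces $ld(X)=ad(X)=\infty$. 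For (5): since $R$ has infinite left global level dimension, I would choose left $R$-modules $M_n$ with $ld(M_n)\ge n$ and set $X=\bigoplus_{n\ge 0}D^n(M_n)$; being a direct sum of exact complexes $X$ is exact, and $Z_{n-1}X\cong M_n$, so $\sup_i ld(Z_iX)=\infty$ and hence $ld(X)=\infty$ by Proposition~\ref{prop-chain complexes}(1). Thus $X$ is an exact but nontrivial $A$-module, as required.
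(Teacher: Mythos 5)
Your proposal is correct and follows essentially the same route as the paper: part (1) rests on the characterization that a chain complex is finitely $n$-presented over $A$ iff it is bounded with finitely $n$-presented components (which you sketch a proof of, whereas the paper simply cites~\cite[Prop.~2.1.4]{Zhao-Perez-FP_n-complexes}), and parts (2)--(5) are deduced from Proposition~\ref{prop-chain complexes} and Theorem~\ref{prop-dimensions} exactly as in the paper, which is terse about them. Your constructions fill in details the paper omits --- in particular the ``exactly dimension $d$'' direction of (2) via the disk-sum $\bigoplus_k D^k(N)$, and in (5) a direct sum $\bigoplus_n D^n(M_n)$ rather than the paper's single disk on a module $M$ with $ld(M)=ad(M)=\infty$ --- but these are equivalent and do not change the argument.
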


\begin{proof}
(1) For $A= R[x]/(x^2)$ to be graded (left) $n$-coherent means that the class of all finitely $n$-presented (left) $R$-chain complexes is thick. But a chain complex $X$ is finitely $n$-presented if and only if it is bounded with each component $X_n$ a finitely $n$-presented (left) $R$-module~\cite[Prop.2.1.4]{Zhao-Perez-FP_n-complexes}. It follows that the graded ring $A = R[x]/(x^2)$ is (left) $n$-coherent whenever $R$ is (left) $n$-coherent.
(We also point out that the notion of (left) 0-coherent coincides with the usual notion of graded (left) Noetherian. The reason is because a homogeneous (left) ideal is precisely a graded (left) submodule of $A = R[x]/(x^2)$.)

Statements (2), (3), and (4) are consequences of Propositions~\ref{prop-chain complexes} and~\ref{prop-dimensions}.

For (5), suppose $M$ is any (left)
$R$-module with $ld(M) = ad(M) = \infty$. Then any ``disk'' complex on $M$:
$$ \cdots \xrightarrow{} 0 \xrightarrow{} 0 \xrightarrow{} M \xrightarrow{1_M} M \xrightarrow{} 0 \xrightarrow{} 0 \cdots$$ is an exact complex with a cycle of infinite  level and absolutely clean dimensions. So again using Proposition~\ref{prop-chain complexes} we conclude from (3) that this $A$-module is nontrivial.
\end{proof}

\section{The stable module category of an AC-Gorenstein ring}\label{sec-proj-model}

Let $R$ be an AC-Gorenstein ring and let $\class{W}$ be the class of all trivial $R$-modules as in Definition~\ref{def-trivial}. The goal of this section is to construct a finitely generated abelian model structure on $R$-Mod having $\class{W}$ as the class of trivial objects. This is accomplished in Theorem~\ref{them-Gorenstein AC-projectives complete cotorsion pair}. The cofibrant (resp. trivially cofibrant) objects will be the Gorenstein AC-projective (resp. categorically projective) $R$-modules.
We think of the homotopy category of this model structure as the \emph{stable module category of $R$}, and denote it by Stmod($R$), and it is a compactly generated triangulated category.

Suppose $F$ is a module of type $FP_{\infty}$. By definition this module has a projective resolution
$$\cdots \xrightarrow{} P_2  \xrightarrow{} P_1  \xrightarrow{} P_0  \xrightarrow{} F  \xrightarrow{} 0$$ with each $P_i$ finitely generated. We let $\Omega^1F = \ker(P_{0} \xrightarrow{} F)$ denote the first syzygy, and more generally $\Omega^nF = \ker(P_{n-1} \xrightarrow{} P_{n-2})$ denotes the $n$-th syzygy. Note that each $\Omega^nF$ is also of type $FP_{\infty}$.

\begin{proposition}\label{prop-ad-cot-pair}
Let $R$ be any ring and $d \geq 0$ be a given natural number.
Let $\class{AC}_d$ denote the class of all modules $N$ having $ad(N) \leq d$. Then $(\leftperp{\class{AC}_d},\class{AC}_d)$ is a cotorsion pair cogenerated by the set $\class{S} = \{\Omega^dF\}$ of all $d$-th syzygies, where $F$ ranges through the set of all (isomorphism representatives of) modules of type $FP_{\infty}$.
\end{proposition}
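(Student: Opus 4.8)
The plan is to show that $\class{AC}_d$ equals the right orthogonal $\rightperp{\class{S}}$ of the set $\class{S}$ of all $d$-th syzygies of modules of type $FP_\infty$, and then to invoke the standard machinery (Eklof–Trlifaj) that any class of the form $\rightperp{\class{S}}$ for a \emph{set} $\class{S}$ forms the right half of a complete, hereditary cotorsion pair cogenerated by $\class{S}$. First I would observe that, by Lemma~\ref{lemma-dimension shifting} (dimension shifting), for any module $F$ of type $FP_\infty$ with chosen projective resolution and $d$-th syzygy $\Omega^d F$, one has $\Ext^1_R(\Omega^d F, N) \cong \Ext^{d+1}_R(F, N)$ for every module $N$. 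Hence $N \in \rightperp{\class{S}}$ if and only if $\Ext^{d+1}_R(F, N) = 0$ for all $F$ of type $FP_\infty$. By the absolutely clean dimension Lemma~\ref{lemma-abs clean dimension lemma}, part (2), this is exactly the condition $ad(N) \leq d$, i.e.\ $N \in \class{AC}_d$. This establishes $\class{AC}_d = \rightperp{\class{S}}$.

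Next I would address why $\class{S}$ is legitimately a \emph{set}: this is the reason the proposition is phrased the way it is. It was recorded in~\cite{bravo-gillespie-hovey} that there is a set of isomorphism representatives of modules of type $FP_\infty$; fixing one projective resolution for each such representative produces a set of syzygies $\Omega^d F$, so $\class{S}$ is a set. I should also note that the particular choice of projective resolution is immaterial up to the orthogonality class, since any two $d$-th syzygies of $F$ differ by a projective summand and $\Ext^1_R(P, -) = 0$ for $P$ projective; but for the statement it suffices to fix one resolution per representative.

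With $\class{AC}_d = \rightperp{\class{S}}$ and $\class{S}$ a set, the cotorsion pair $(\leftperp{(\rightperp{\class{S}})}, \rightperp{\class{S}}) = (\leftperp{\class{AC}_d}, \class{AC}_d)$ is cogenerated by $\class{S}$ essentially by definition, and it is complete by the Eklof–Trlifaj theorem (see~\cite{trlifaj-book} or~\cite{enochs-jenda-book}), since $R$-Mod has enough projectives and $\class{S}$ is a set. I do not expect a serious obstacle here; the only point requiring a little care is the bookkeeping around ``cogenerated by a set'' — making sure $\class{S}$ genuinely is a set and that the dimension-shifting isomorphism is natural enough to convert the $\Ext^1$ vanishing over all of $\class{S}$ into the $\Ext^{d+1}$ vanishing over all $F$ of type $FP_\infty$. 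Once $\class{AC}_d = \rightperp{\class{S}}$ is in hand, everything else is the standard formalism of small cotorsion pairs.
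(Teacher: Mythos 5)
Your proof is correct and follows essentially the same route as the paper: both establish $\class{AC}_d = \rightperp{\class{S}}$ by combining the dimension-shifting Lemma~\ref{lemma-dimension shifting} with the $\Ext^{d+1}$ characterization in Lemma~\ref{lemma-abs clean dimension lemma}, and then appeal to the general fact that a set cogenerates a cotorsion pair. Your version is slightly more streamlined since you use only the syzygy half of dimension shifting and invoke part~(2) of Lemma~\ref{lemma-abs clean dimension lemma} directly, whereas the paper routes through the cosyzygy $\Sigma^d N$ and part~(4), but the underlying argument is the same.
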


\begin{proof}
Certainly $(\leftperp{(\rightperp{\class{S}})},\rightperp{\class{S}})$ is a cotorsion pair cogenerated by $\class{S}$. We just need to show that $\rightperp{\class{S}} = \class{AC}_d$. We argue as in the proof of~\cite[Theorem~8.3]{hovey}.

($\subseteq$) Let $N \in \rightperp{\class{S}}$ and consider a short exact sequence
$$0 \xrightarrow{} N \xrightarrow{} I^0 \xrightarrow{} I^1 \xrightarrow{} \cdots \xrightarrow{} I^{d-1} \xrightarrow{} \Sigma^dN \xrightarrow{} 0$$ where each $I^i$ is injective. We wish to show that $\Sigma^d N$ is absolutely clean and this requires showing $\Ext^1_R(F,\Sigma^d N) = 0$ where $F$ is any module of type $FP_{\infty}$. But by Lemma~\ref{lemma-dimension shifting} we have $\Ext^1_R(F,\Sigma^dN) \cong \Ext^1_R(\Omega^dF,N)$ and this equals 0 by hypothesis.

($\supseteq$) We can reverse the above argument. Let $N \in \class{AC}_d$. Then constructing an exact sequence $$0 \xrightarrow{} N \xrightarrow{} I^0 \xrightarrow{} I^1 \xrightarrow{} \cdots \xrightarrow{} I^{d-1} \xrightarrow{} \Sigma^d N \xrightarrow{} 0$$ with each $I^i$ injective must produce a $\Sigma^dN$ that is absolutely clean, by Lemma~\ref{lemma-abs clean dimension lemma}(4). So $0 = \Ext^1_R(F,\Sigma^dN) \cong \Ext^1_R(\Omega^dF,N)$. So $N \in \rightperp{\class{S}}$.
\end{proof}

We now can prove our main theorem. Let $\class{GP}$ denote the class of all Gorenstein AC-projective modules; see Definition~\ref{def-Gorenstein AC-projective}. 

\begin{theorem}\label{them-Gorenstein AC-projectives complete cotorsion pair}
Let $R$ be an AC-Gorenstein ring of dimension $d$ and let $\class{W}$ be the class of all trivial $R$-modules as in Definition~\ref{def-trivial}.
Then $\class{M}_{prj} = (\class{GP}, \class{W})$ is a projective cotorsion pair in $R$-Mod. Furthermore, its associated model structure is finitely generated. In fact $\textnormal{Ho}(\class{M}_{prj})$, its homotopy category, is a compactly generated triangulated category with the set $\class{S} = \{\Omega^dF\}$ of Proposition~\ref{prop-ad-cot-pair} serving as a set of compact weak generators.
\end{theorem}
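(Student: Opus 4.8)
The plan is to produce the projective cotorsion pair $(\class{GP},\class{W})$ by invoking Proposition~\ref{prop-how to create a projective model structure} with $\class{C} = \class{GP}$, after first verifying $\rightperp{\class{GP}} = \class{W}$. The key preliminary observation is that, for an AC-Gorenstein ring of dimension $d$, the class $\class{W}$ coincides with the class $\class{AC}_d$ of modules of absolutely clean dimension at most $d$ (this is Theorem~\ref{prop-dimensions} combined with Definition~\ref{def-trivial}), so Proposition~\ref{prop-ad-cot-pair} already tells us $\class{W} = \rightperp{\class{S}}$ is the right-hand side of a complete cotorsion pair cogenerated by the \emph{set} $\class{S} = \{\Omega^d F\}$. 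Thus $(\leftperp{\class{W}}, \class{W})$ is a complete, cogenerated-by-a-set cotorsion pair; the content to be checked is that $\leftperp{\class{W}} = \class{GP}$. The inclusion $\class{GP} \subseteq \leftperp{\class{W}} = {}^{\perp}\class{AC}_d$ should follow by a dimension-shifting argument: if $M$ is Gorenstein AC-projective, it embeds as a syzygy in a totally acyclic complex of projectives that stays exact under $\Hom_R(-,L)$ for level $L$; since every trivial module $N$ has finite level dimension $\le d$, one resolves $N$ by level modules and shifts to reduce $\Ext^1_R(M,N)$ to a higher $\Ext$ against a level module, which vanishes because $M$ has a "complete level-acyclic" resolution (using that higher $\Ext$ into the $d$-th cosyzygy reduces to $\Ext^1$ against the level piece). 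For the reverse inclusion $\leftperp{\class{W}} \subseteq \class{GP}$ one argues that since $\class{W}$ contains all projectives and all injectives and the cotorsion pair is complete, any $M \in \leftperp{\class{W}}$ fits in short exact sequences allowing a Gorenstein-AC-projective approximation; combined with the standard fact that a module left-orthogonal to the trivial class and admitting a "proper" resolution is Gorenstein AC-projective, one concludes equality. That $\class{W}$ is thick and contains $\class{P}$ is exactly Corollary~\ref{cor-trivial}, so the hypotheses of Proposition~\ref{prop-how to create a projective model structure} are met once $\rightperp{\class{GP}} = \class{W}$ is known, giving the projective cotorsion pair $\class{M}_{prj} = (\class{GP},\class{W})$ and its associated model structure.

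For the "finitely generated" assertion I would recall that a projective model structure on a module category is \emph{finitely generated} (in Hovey's sense) provided both the generating cofibrations and generating trivial cofibrations can be taken to have finitely presented domains and codomains. Here the cotorsion pair is cogenerated by the \emph{set} $\class{S} = \{\Omega^d F\}$ where each $F$ is of type $FP_\infty$; since syzygies of $FP_\infty$ modules are again $FP_\infty$ and in particular finitely presented, the small object argument run on the set of maps $\{0 \to \Omega^d F\} \cup \{\text{inclusions of finitely generated projectives}\}$ (or the standard set built from $\class{S}$ together with $R$) produces generating sets with finitely presented domains and codomains. I would cite the machinery of~\cite{hovey} for translating "cogenerated by a set of $FP_\infty$-type modules" into "finitely generated model structure", much as in~\cite[Section~7]{hovey}.

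The compact generation of $\Ho(\class{M}_{prj})$ then follows from the general principle (again traceable to~\cite{hovey}, and used in~\cite{bravo-gillespie-hovey}) that the homotopy category of a finitely generated projective model structure is compactly generated, with the images of the finitely presented generators serving as compact generators. Concretely, the objects $\Omega^d F$ for $F$ of type $FP_\infty$ become compact in $\Ho(\class{M}_{prj})$ because maps out of them are computed by $\Hom$ in the stable category, which commutes with the coproducts (= direct sums) that exist there, using that $\Ext^1_R(\Omega^d F, -)$ commutes with direct sums — precisely the parenthetical remark in the proof of Corollary~\ref{cor-trivial} that $\Ext^{d+1}_R(F,-)$ commutes with direct sums when $F$ is $FP_\infty$. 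That this set \emph{generates} (i.e. detects zero objects) reduces to: if $X$ is trivial-cofibrant-fibrant and $[\Omega^d F, X] = 0$ for all such $F$, then $X$ is trivial, hence zero in the homotopy category; and $[\Omega^d F, X] = 0$ together with $X$ fibrant forces $\Ext^1_R(\Omega^d F, X)=0$ for all $F$, i.e. $X \in \rightperp{\class{S}} = \class{W}$.

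The main obstacle I anticipate is the identification $\leftperp{\class{W}} = \class{GP}$, specifically the inclusion $\leftperp{\class{W}} \subseteq \class{GP}$: one must construct, for a module $M$ orthogonal to all trivial modules, a totally acyclic complex of projectives with $M$ as a syzygy that remains exact after applying $\Hom_R(-,L)$ for every level $L$, and this requires knowing that $\Ext^{>0}_R(M, L) = 0$ for level $L$ — which is not immediate from $\Ext^1_R(M,\class{W})=0$ since level modules of infinite absolutely clean dimension need not lie in $\class{W}$. Resolving this needs the AC-Gorenstein hypothesis in its full strength (every level module has $ad \le d$, hence \emph{is} trivial), so that $\leftperp{\class{W}}$ really does kill $\Ext$ against all level modules; once that is in place the construction of the complete resolution is the standard Gorenstein-type argument, but getting the logic straight is where care is needed.
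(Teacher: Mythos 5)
Your proposal follows essentially the same route as the paper's proof: reduce to the cotorsion pair $(\leftperp{\class{AC}_d},\class{AC}_d)$ cogenerated by the set $\class{S}=\{\Omega^dF\}$ via Proposition~\ref{prop-ad-cot-pair} and Theorem~\ref{prop-dimensions}, invoke Corollary~\ref{cor-trivial} and Proposition~\ref{prop-how to create a projective model structure} to get the projective cotorsion pair, then identify $\leftperp{\class{W}}$ with $\class{GP}$ by dimension shifting in one direction and by splicing a projective resolution with a projective coresolution (obtained from completeness of the pair) in the other, and finally appeal to Hovey's machinery for finite generation and compact generation. You also correctly pinpoint the key role of the AC-Gorenstein hypothesis in the hard inclusion: it ensures every level module lies in $\class{W}$, which is exactly what makes the spliced complex of projectives remain exact under $\Hom_R(-,L)$ for level $L$.

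One inaccuracy worth flagging: your proposed set of generating cofibrations $\{0\to\Omega^dF\}\cup\{\text{inclusions of f.g.\ projectives}\}$ is not the right one. A map $0\to\Omega^dF$ is a cofibration only if $\Omega^dF$ is cofibrant (i.e.\ Gorenstein AC-projective), and there is no reason for the $d$-th syzygy of an $FP_\infty$ module to lie in $\class{GP}$. The paper instead takes $I=\{\Omega^{d+1}F\hookrightarrow P_d\}\cup\{0\hookrightarrow R\}$ (inclusions of the $(d{+}1)$-st syzygy into the $d$-th finitely generated projective of a fixed $FP_\infty$-resolution), following~\cite[Theorem~9.4]{hovey}; these maps generate the cofibrations via the small object argument without themselves needing to be cofibrations, and since all domains and codomains are of type $FP_\infty$ (in particular finitely presented) the model structure is finitely generated and~\cite[Corollary~7.4.4]{hovey-model-categories} yields compact generation. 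Your hedge (``or the standard set built from $\class{S}$ together with $R$'') lands on the right answer, but the literal set you wrote down would not work.
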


\begin{proof}
We give the proof in three steps.  First we show $(\leftperp{\class{W}},\class{W})$ is a projective cotorsion pair. Second, we show its homotopy category to be compactly generated by the set $\class{S} = \{\Omega^dF\}$. Last, we will show $\leftperp{\class{W}}$ is precisely the class $\class{GP}$ of Gorenstein AC-projectives.

\noindent (\textbf{Step 1}) We show that $(\leftperp{\class{W}},\class{W})$ is a projective cotorsion pair. Since, by definition, $\class{W}$ is the class of all modules in Proposition~\ref{prop-dimensions}, we get that $(\leftperp{\class{W}},\class{W}) = (\leftperp{\class{AC}_d},\class{AC}_d)$ is precisely the cotorsion pair of Proposition~\ref{prop-ad-cot-pair}. So  $(\leftperp{\class{W}},\class{W})$ is a (necessarily complete) cotorsion pair cogenerated by the set $\class{S} = \{\Omega^dF\}$. But in this case the class $\class{W}$ is thick and contains the projectives by Corollary~\ref{cor-trivial}. Hence $(\leftperp{\class{W}},\class{W})$ is a projective cotorsion pair by Proposition~\ref{prop-how to create a projective model structure}.

\noindent (\textbf{Step 2}) We now show that the set $\class{S} = \{\Omega^dF\}$ of Proposition~\ref{prop-ad-cot-pair} is a set of compact weak generators for the homotopy category associated to $\class{M} = (\leftperp{\class{W}},\class{W})$. For this, we note that each syzygy $\Omega^nF$ of a module $F$ of type $FP_{\infty}$, (with corresponding resolution $$\cdots \xrightarrow{} P_2  \xrightarrow{} P_1  \xrightarrow{} P_0  \xrightarrow{} F  \xrightarrow{} 0$$ by finitely generated projectives $P_i$), is also of type $FP_{\infty}$. So as in the proof of~\cite[Theorem~9.4]{hovey}, we get that the set $$I = \{\, \Omega^{d+1}F \hookrightarrow P_n \,\} \cup \{\,0 \hookrightarrow R\,\},$$
 provides a set of (finite) generating cofibrations for $\class{M} = (\leftperp{\class{W}},\class{W})$. Also, $J = \{\,0 \hookrightarrow R\,\}$ is a set of (finite) generating trivial cofibrations. This means the model structure is finitely generated. It now follows from a general theorem~\cite[Corollary~7.4.4]{hovey-model-categories} that the set  $\class{S} = \{\Omega^dF\}$ serves as a set of compact weak generators for the associated homotopy category  $\textnormal{Ho}(\class{M})$. In particular,  $\textnormal{Ho}(\class{M})$ is compactly generated.

\noindent (\textbf{Step 3}) We show that $M$ is Gorenstein AC-projective if and only if $M \in  \leftperp{\class{W}}$.

\noindent ($\implies$) So first suppose $M$ is Gorenstein AC-projective and let $W \in \class{W}$. We wish to show $\Ext^1_R(M,W) = 0$. Write a finite level resolution $0 \xrightarrow{} L_n \xrightarrow{} \cdots \xrightarrow{} L_1 \xrightarrow{} L_0 \xrightarrow{} W \xrightarrow{} 0$. The very definition of Gorenstein AC-projective immediately tells us that $\Ext^n_R(M,L) = 0$ for any $n>0$ and level module $L$. Using this fact we can apply a dimension shifting argument to conclude $\Ext^1_R(M,W) \cong \Ext^{n+1}_R(M,L_n) = 0$.

On the other hand suppose that $M \in \leftperp{\class{W}}$. We wish to show that $M$ is Gorenstein AC-projective. First
take a projective resolution of $M$ as below.
$$\cdots \xrightarrow{} P_2 \xrightarrow{} P_1 \xrightarrow{} P_0 \xrightarrow{} M \xrightarrow{} 0 $$ Note that since $M \in \leftperp{\class{W}}$, and $(\leftperp{\class{W}}, \class{W})$ is a hereditary cotorsion pair, the kernel at any spot in the sequence is also
in $\leftperp{\class{W}}$.
Next we use the fact that $(\leftperp{\class{W}}, \class{W})$ is a
complete cotorsion pair to find a short exact sequence $0 \xrightarrow{} M \xrightarrow{} P^0 \xrightarrow{} C \xrightarrow{} 0$
where $P^0 \in \class{W}$ and $C \in \leftperp{\class{W}}$.
But $P^0$ must also be in $\leftperp{\class{W}}$
since it is an extension of two such modules.
Since  $(\leftperp{\class{W}}, \class{W})$ has been shown to be a projective cotorsion pair it follows that $P^0$ is a projective module.
Continuing with the same procedure on $C$ we can build a projective coresolution of $M$
as below: $$0 \xrightarrow{} M \xrightarrow{} P^0 \xrightarrow{} P^1 \xrightarrow{} P^2 \xrightarrow{} \cdots.$$ Again the kernel at
each spot is in $\leftperp{\class{W}}$. Pasting this ``right'' coresolution together
with the ``left'' resolution above we get an exact
sequence $$\cdots \xrightarrow{} P_1 \xrightarrow{} P_0 \xrightarrow{} P^0 \xrightarrow{} P^1 \xrightarrow{} \cdots $$ of
projective modules which satisfies the definition of $M$ being a Gorenstein AC-projective $R$-module.
\end{proof}

\subsection{Example: Graded $\boldsymbol{R[x]/(x^2)}$-modules}
It is enlightening to examine the stable module category of the graded ring $A = R[x]/(x^2)$ from Section~\ref{subsec-chain complexes}. Proposition~\ref{prop-chain complexes}(3) characterizes the cofibrant objects. In the case that $R$ has finite (left and right) global level dimension, then an $R$-module is Gorenstein AC-projective if and only if it is projective. So then Proposition~\ref{prop-chain complexes}(3) tells us that the Gorenstein AC-projective $A$-modules are precisely the DG-projective $R$-chain complexes. Furthermore, Corollary~\ref{cor-graded AC-Goren}(3) tells us that the trivial $A$-modules in this case are precisely the exact chain complexes. It follows that the stable module category, Stmod($A$),
coincides with $\class{D}(R)$, the derived category of $R$. The model structure constructed in this section coincides with the usual projective model structure on $\ch$; see~\cite[Chapter~2]{hovey-model-categories}.

In the case that $R$ does not have finite (left and right) global level dimension,  then Stmod($A$) does not coincide with $\class{D}(R)$; because of Corollary~\ref{cor-graded AC-Goren}(5). Nevertheless, Corollary~\ref{cor-graded AC-Goren}(3) characterizes the trivial $A$-modules while Proposition~\ref{prop-chain complexes}(3) describes the cofibrant objects.

\section{The injective model structure}\label{sec-inj-model}

Again we let $R$ denote an AC-Gorenstein ring. We just constructed its stable module category, Stmod($R$), in Theorem~\ref{them-Gorenstein AC-projectives complete cotorsion pair}. It is the homotopy category of the projective cotorsion pair $\class{M}_{prj} = (\class{GP}, \class{W})$.  In this section we construct a dual (injective) model structure $\class{M}_{inj} = (\class{W}, \class{GI})$, where the class $\class{GI}$ of fibrant objects are the Gorenstein AC-injective modules of Definition~\ref{def-Gorenstein AC-projective}.

\begin{theorem}\label{them-Gorenstein AC-injectives complete cotorsion pair}
Let $R$ be an AC-Gorenstein ring of dimension $d$ and let $\class{W}$ be the class of all trivial $R$-modules as in Definition~\ref{def-trivial}.
Then $\class{M}_{inj} = (\class{W}, \class{GI})$ is an injective cotorsion pair in $R$-Mod.
\end{theorem}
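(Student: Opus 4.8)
The plan is to mirror, step by step, the three-part proof of Theorem~\ref{them-Gorenstein AC-projectives complete cotorsion pair}. First I would show that $(\class{W},\rightperp{\class{W}})$ is a complete (hence hereditary) cotorsion pair; then upgrade it to an \emph{injective} cotorsion pair using that $\class{W}$ is thick and contains all injectives; and finally identify $\rightperp{\class{W}}$ with the class $\class{GI}$ of Gorenstein AC-injective modules, so that $\class{M}_{inj}=(\class{W},\class{GI})$.

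Completeness in the first step is exactly where the argument diverges from the projective case, and it is the main obstacle. In Theorem~\ref{them-Gorenstein AC-projectives complete cotorsion pair} the pair $(\leftperp{\class{W}},\class{W})$ was \emph{co}generated by the explicit set $\class{S}=\{\Omega^dF\}$, so completeness came for free from the Eklof--Trlifaj theorem. Here I need the dual input: that the class $\class{W}$ is \emph{deconstructible}, i.e.\ there is a set $\class{S}'\subseteq\class{W}$ such that every trivial module is a transfinite extension of modules from $\class{S}'$. By Theorem~\ref{prop-dimensions}, $\class{W}$ is precisely the class of modules of level dimension at most $d$, and the class of level modules is itself deconstructible (the level analogue of the deconstructibility of the absolutely clean modules recalled in the Introduction, established in~\cite{bravo-gillespie-hovey}); from a generating set of level modules one obtains $\class{S}'$ as a set of (small isomorphism representatives of) modules $M$ fitting into an exact sequence $0\to L_d\to\cdots\to L_0\to M\to 0$ with the $L_i$ in that set, and part~(4) of Lemma~\ref{lemma-level dimension lemma} together with a standard filtration argument shows every module of $\class{W}$ is $\class{S}'$-filtered. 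The cotorsion pair generated by $\class{S}'$ is then complete, and its left-hand class is exactly $\class{W}$: it is contained in $\class{W}$ since $\class{W}$ contains $R$ and $\class{S}'$ and is closed under direct summands and transfinite extensions (Corollary~\ref{cor-trivial}), and it contains $\class{W}$ by the choice of $\class{S}'$. Thus $(\class{W},\rightperp{\class{W}})$ is a complete, and necessarily hereditary, cotorsion pair.

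For the second step, Corollary~\ref{cor-trivial} gives that $\class{W}$ is thick and contains every injective module, so the dual of Proposition~\ref{prop-how to create a projective model structure} (the construction of an injective model structure, cf.~\cite{gillespie-recollement}) applies and produces an injective cotorsion pair $(\class{W},\rightperp{\class{W}})$. The only thing to verify is that $\class{W}\cap\rightperp{\class{W}}$ is exactly the class of injectives: given $X$ in that intersection, embed it in an injective, $0\to X\to I\to I/X\to 0$; then $I/X\in\class{W}$ by thickness, while $\Ext^1_R(I/X,X)=0$ because $X\in\rightperp{\class{W}}$, so the sequence splits and $X$ is a direct summand of $I$.

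For the third step I would dualize Step~3 of Theorem~\ref{them-Gorenstein AC-projectives complete cotorsion pair}. If $M$ is Gorenstein AC-injective, its defining totally acyclic complex of injectives is in particular an injective coresolution of $M$ that remains exact after $\Hom_R(A,-)$ for every absolutely clean $A$; left-exactness of $\Hom_R(A,-)$ then forces $\Ext^i_R(A,M)=0$ for all $i\geq 1$. For $W\in\class{W}$, Theorem~\ref{prop-dimensions} supplies an absolutely clean coresolution $0\to W\to A^0\to\cdots\to A^d\to 0$, and dimension shifting along it yields $\Ext^1_R(W,M)\cong\Ext^{d+1}_R(A^d,M)=0$, so $\class{GI}\subseteq\rightperp{\class{W}}$. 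Conversely, let $M\in\rightperp{\class{W}}$. An injective coresolution of $M$ has all cokernels again in $\rightperp{\class{W}}$ by heredity, and, using that $(\class{W},\rightperp{\class{W}})$ has enough projectives, one builds $\cdots\to W_1\to W_0\to M\to 0$ in which every kernel lies in $\rightperp{\class{W}}$ and each $W_i$, being an extension of two modules of $\rightperp{\class{W}}$, lies in $\class{W}\cap\rightperp{\class{W}}$ and is therefore injective. Splicing the two resolutions gives an exact complex of injectives with $M=\ker(I^0\to I^1)$ and with every cycle module in $\rightperp{\class{W}}$; since each absolutely clean $A$ lies in $\class{W}$ (it has absolutely clean dimension $0\leq d$), we have $\Ext^1_R(A,Z)=0$ for every cycle module $Z$, so $\Hom_R(A,-)$ leaves the complex exact. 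Hence $M$ is Gorenstein AC-injective and $\rightperp{\class{W}}=\class{GI}$. The bulk of the work, and the only genuinely new ingredient beyond dualizing the projective case, is Step~1: producing the generating set that makes $(\class{W},\rightperp{\class{W}})$ complete.
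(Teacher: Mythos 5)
Your overall architecture is the paper's own: Steps 2 and 3 (upgrading $(\class{W},\rightperp{\class{W}})$ to an injective cotorsion pair via Corollary~\ref{cor-trivial} and the dual of Proposition~\ref{prop-how to create a projective model structure}, and identifying $\rightperp{\class{W}}$ with the Gorenstein AC-injectives by dimension shifting in one direction and by splicing an injective coresolution with a resolution by modules of $\class{W}\cap\rightperp{\class{W}}$ in the other) are essentially what the paper does, and they are correct as you state them. The problem is Step 1, which you yourself identify as the bulk of the work and then do not carry out. The phrase ``a standard filtration argument shows every module of $\class{W}$ is $\class{S}'$-filtered'' is exactly the assertion that needs proof, and the input you cite for it is not the right one: deconstructibility of the class of level modules does not formally pass to the class of modules of level dimension at most $d$. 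Part (4) of Lemma~\ref{lemma-level dimension lemma} tells you that the $d$-th kernel of any level resolution is level, but it gives no mechanism for chopping a given length-$d$ level resolution of $W$ into small subresolutions whose filtration quotients are again level resolutions, which is what a filtration of $W$ by small members of $\class{W}$ requires. (There is also a minor mismatch in your $\class{S}'$: a small module of $\class{W}$ need not admit a resolution by modules literally taken from a chosen generating set of level modules; the paper simply takes $\class{S}$ to be all $W\in\class{W}$ with $|W|\le\kappa$ for a fixed regular $\kappa>|R|$.)

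What actually makes Step 1 run is purity, not deconstructibility. The paper's proof uses that level modules are closed under pure submodules and pure quotients (\cite{bravo-gillespie-hovey}), so the class $\class{LR}_d$ of length-$d$ level resolutions, viewed as chain complexes, is closed under pure subcomplexes and pure quotients (Lemma~\ref{lemma-resolutions}); then \cite[Prop.~3.4]{estrada-gill-coherent} filters any $L\in\class{LR}_d$ by subcomplexes of cardinality at most $\kappa$ with filtration quotients again in $\class{LR}_d$, and reading off the component containing $W$ gives the desired filtration of $W$ by small modules of $\class{W}$, whence $\class{W}=\leftperp{(\rightperp{\class{S}})}$ and the pair is complete. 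Alternatively, one can avoid complexes altogether: by Lemma~\ref{lemma-level dimension lemma}, $\class{W}$ is precisely the class of $M$ with $\Tor^R_1(\Omega^dF,M)=0$ as $\Omega^dF$ ranges over a set of syzygies of right modules of type $FP_\infty$, and since pure exact sequences are direct limits of split ones, this class is closed under pure submodules and pure quotients; writing $M\in\class{W}$ as a continuous union of pure submodules of size at most $\kappa$ then produces the filtration directly. Either way, this purity step is the genuinely new content of the theorem, and it is missing from your proposal.
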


The proof will use the notion of purity in the category of chain complexes. Recall a short exact sequence $\class{E} : 0 \xrightarrow{} P \xrightarrow{} X \xrightarrow{} Y \xrightarrow{} 0$ of chain complexes is called \textbf{pure} if $\Hom_{\ch}(F,\class{E})$ remains exact for any finitely presented chain complex $F$. In the same way we say $P \subseteq X$ is a \textbf{pure subcomplex} and $X/P \cong Y$ is a \textbf{pure quotient}. The proof will use the following lemma.

\begin{lemma}\label{lemma-resolutions}
Let $R$ be an AC-Gorenstein ring of dimension $d$ and let $\class{LR}_d$ denote the class of all level resolutions of length $d$. That is, the objects $L \in \class{LR}_d$ are precisely the exact chain complexes $$L \equiv 0 \xrightarrow{}  L_d \xrightarrow{} \cdots \xrightarrow{} L_1 \xrightarrow{} L_0 \xrightarrow{} W \xrightarrow{} 0$$ with each $L_i$ a level $R$-module. (Thus any such $W$ is in $\class{W}$, and conversely, for any $W \in \class{W}$ there is some $L \in \class{LR}_d$ ending in $W$.) Then the class $\class{LR}_d$ is closed under pure submodules and pure quotients.
\end{lemma}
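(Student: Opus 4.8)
The plan is to reduce the statement about chain complexes $L \in \class{LR}_d$ to the known fact that the class of level $R$-modules is closed under pure submodules and pure quotients (this follows from the character-module characterization: $L$ is level iff $L^+$ is absolutely clean, together with the fact that a pure exact sequence of modules dualizes to a split exact sequence, and absolutely clean modules are closed under direct summands and extensions — see Definition~\ref{def-level} and the surrounding discussion from~\cite{bravo-gillespie-hovey}). First I would take a pure exact sequence of chain complexes $0 \xrightarrow{} L' \xrightarrow{} L \xrightarrow{} L'' \xrightarrow{} 0$ with $L \in \class{LR}_d$ and argue that purity in $\ch$ is detected degreewise: a short exact sequence of complexes is pure if and only if it is degreewise pure exact \emph{and} remains exact after applying $Z_i(-)$, or, more simply, if and only if each of the induced sequences of modules in every degree is pure. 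The cleanest route is to recall (or cite the standard fact) that $0 \xrightarrow{} L' \xrightarrow{} L \xrightarrow{} L'' \xrightarrow{} 0$ is pure in $\ch$ precisely when it is a pure exact sequence of $R$-modules in each degree; purity with respect to finitely presented complexes is equivalent to purity with respect to the disk complexes $D^n(R/\text{f.p.})$, which forces degreewise purity.

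Granting that, the sequence $0 \xrightarrow{} L' \xrightarrow{} L \xrightarrow{} L'' \xrightarrow{} 0$ is, in each homological degree $i \in \{0,1,\dots,d\}$ (with degree $0$ being the ``$W$'' spot and degrees $1,\dots,d$ the $L_i$ spots), a pure exact sequence of $R$-modules $0 \xrightarrow{} (L')_i \xrightarrow{} L_i \xrightarrow{} (L'')_i \xrightarrow{} 0$. Since each $L_i$ (for $i \geq 1$) is level and the class of level modules is closed under pure submodules and pure quotients, both $(L')_i$ and $(L'')_i$ are level for $i = 1,\dots,d$. It remains only to check exactness of $L'$ and $L''$ as chain complexes: but $L'$ is a pure subcomplex of the exact complex $L$, hence exact (a pure subcomplex of an exact complex is exact, since purity gives that $Z_i$ applied to the short exact sequence of complexes stays exact, comparing cycles and boundaries), and $L''$ is then exact by the long exact homology sequence. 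Therefore $L'$ and $L''$ are themselves exact level resolutions of length $d$ ending in $(L')_0 \in \class{W}$ and $(L'')_0 \in \class{W}$ respectively — the membership of these degree-$0$ terms in $\class{W}$ being automatic from the defining property of $\class{LR}_d$ stated in the lemma. Hence $L', L'' \in \class{LR}_d$.

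The main obstacle I anticipate is pinning down the precise meaning of ``pure exact sequence of chain complexes'' and verifying the equivalence with degreewise purity plus exactness of cycles — this is the one place where a genuine (if standard) argument is needed, rather than a citation. One has to be careful that finitely presented objects in $\ch$ are the bounded complexes of finitely presented modules, so testing against them is equivalent to testing against disk and sphere complexes on finitely presented modules; from there degreewise purity follows, and the cycle-exactness of the pure subcomplex is a short diagram chase. Everything else — closure of level modules under pure sub/quotients, and the bookkeeping that the resulting complexes again lie in $\class{LR}_d$ — is routine given the results already established in Sections~\ref{sec-preliminaries} and~\ref{sec-dimensions} and in~\cite{bravo-gillespie-hovey}.
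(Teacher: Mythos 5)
Your overall plan is the same as the paper's: pass from a pure exact sequence of complexes to degreewise statements, use that level modules are closed under pure submodules and pure quotients, and use that exactness is inherited by pure subcomplexes and pure quotients. The paper disposes of the two purity-in-$\ch$ facts by citation (\cite[Lemma~3.2(1) and (4)]{estrada-gill-coherent}) and cites \cite[Prop.~2.10(2)]{bravo-gillespie-hovey} for the level-module closure, whereas you attempt to prove the purity facts yourself --- and this is exactly where your proposal has a genuine error. Your ``more simply'' claim, that a short exact sequence in $\ch$ is pure if and only if it is degreewise pure, is false, as is the assertion that testing against finitely presented complexes reduces to testing against disk complexes. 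Sphere complexes $S^n(F)$ (with a finitely presented $F$ concentrated in degree $n$) are also finitely presented objects of $\ch$, and since $\Hom_{\ch}(S^n(F),X) \cong \Hom_R(F,Z_nX)$ they impose genuine conditions on cycles. Concretely, the inclusion of $S^0(R)$ into the disk complex $D^1(R)$ (identity differential from degree $1$ to degree $0$) is degreewise split, hence degreewise pure, but testing against $S^1(R)$ shows it is not pure in $\ch$; moreover $S^0(R)$ is a non-exact degreewise-pure subcomplex of the exact complex $D^1(R)$. So under your simplified characterization, the later step of your own argument --- ``a pure subcomplex of an exact complex is exact, since purity gives that $Z_i$ applied to the sequence stays exact'' --- would fail: that cycle-exactness is precisely the extra information carried by the sphere-complex tests, not by degreewise purity.

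The good news is that the gap is local and repairable: the only implication you actually need in the forward direction is that categorical purity in $\ch$ implies degreewise purity (true, via the disk complexes, and this is \cite[Lemma~3.2(1)]{estrada-gill-coherent}), and for exactness of $L'$ and $L''$ you need the closure of exact complexes under pure subcomplexes and pure quotients, which does hold for categorical purity (via the sphere-complex conditions; this is \cite[Lemma~3.2(4)]{estrada-gill-coherent}). If you delete the false ``if and only if degreewise pure'' reduction and instead record the correct characterization (degreewise purity together with purity, or at least exactness, of the cycle sequences), the rest of your argument --- closure of level modules under pure sub/quotients via character modules, the homology long exact sequence for $L''$, and the bookkeeping that the end terms land in $\class{W}$ --- goes through and recovers the paper's proof.
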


\begin{proof}
Let $\class{E} : 0 \xrightarrow{} P \xrightarrow{} L \xrightarrow{} Y \xrightarrow{} 0$ be a pure exact sequence with $L \in \class{LR}_d$. Then~\cite[Lemma~3.2(1)]{estrada-gill-coherent} tells us that each $\class{E}_i : 0 \xrightarrow{} P_i \xrightarrow{} L_i \xrightarrow{} Y_i \xrightarrow{} 0$ is a pure exact sequence. Since the class of level modules is closed under pure submodules and pure quotients~\cite[Prop.~2.10(2)]{bravo-gillespie-hovey}, and since the class of exact complexes is also closed under pure subcomplexes and pure quotients~\cite[Lemma~3.2(4)]{estrada-gill-coherent}, we get that $P, Y \in \class{LR}_d$.
\end{proof}

\begin{proof}[Proof of Theorem~\ref{them-Gorenstein AC-injectives complete cotorsion pair}]
Let $\kappa$ be some regular cardinal $\kappa > |R|$. We let $\class{S}$ be a set (of isomorphism representatives) of all $R$-modules $W \in \class{W}$ with $|W| \leq \kappa$. Of course $(\leftperp{(\rightperp{\class{S}})},\rightperp{\class{S}})$ is a (necessarily complete) cotorsion pair cogenerated by $\class{S}$. We will now show that $\leftperp{(\rightperp{\class{S}})} = \class{W}$.

($\subseteq$)  By a well-known fact, $\leftperp{(\rightperp{\class{S}})}$ equals the class of all direct summands (retracts) of transfinite extensions of modules in the set $\class{S}$. We already know from Corollary~\ref{cor-trivial} that $\class{W}$ is closed under retracts and transfinite extensions and so we conclude $\class{S} \subseteq \leftperp{(\rightperp{\class{S}})} \subseteq \class{W}$.

($\supseteq$) Let $W \in \class{W}$. Then there exists a chain complex $$L \equiv 0 \xrightarrow{}  L_d \xrightarrow{} \cdots \xrightarrow{} L_1 \xrightarrow{} L_0 \xrightarrow{} W \xrightarrow{} 0$$ with each $L_i$ a level $R$-module. Referring to Lemma~\ref{lemma-resolutions} we see that $L \in \class{LR}_d$. As the class $\class{LR}_d$ is closed under pure subcomplexes and pure quotients we may apply~\cite[Prop.~3.4]{estrada-gill-coherent} to conclude that $L$ is a transfinite extension of complexes $L = \cup_{\alpha < \lambda} L_{\alpha}$ with each $L_0 , L_{\alpha+1}/L_{\alpha} \in \class{LR}_d$ and $|L_0| , |L_{\alpha+1}/L_{\alpha}|  \leq \kappa$. Since the definition used there for \emph{cardinality of a chain complex} is $|X| := |\coprod_{n \in \Z} X_n|$, it follows that $W$ is a transfinite extension of modules in $\class{S}$, whence $\class{W} \subseteq \leftperp{(\rightperp{\class{S}})}$.

We now have that $(\class{W},\rightperp{\class{W}})$ is a cotorsion pair cogenerated by the set $\class{S}$. Since $\class{W}$ is thick and contains all injective modules (Corollary~\ref{cor-trivial}) we conclude $(\class{W},\rightperp{\class{W}})$ is an injective cotorsion pair by the dual of Proposition~\ref{prop-how to create a projective model structure}.

Now let $M$ be an $R$-module. To finish proving the theorem we now show that $M$ is Gorenstein AC-injective if and only if $M \in \rightperp{\class{W}}$. So first suppose $M$ is Gorenstein AC-injective. Let $W \in \class{W}$. We wish to show $\Ext^1_R(W,M) = 0$. Write a finite absolutely clean coresolution $0 \xrightarrow{} W \xrightarrow{} A^0 \xrightarrow{} A^1 \xrightarrow{} \cdots \xrightarrow{} A^n \xrightarrow{} 0$. The very definition of Gorenstein AC-injective immediately tells us that $\Ext^n_R(A,M) = 0$ for any $n>0$ and absolutely clean module $A$. Using this fact we can apply a dimension shifting argument to conclude $\Ext^1_R(W,M) \cong \Ext^{n+1}_R(A^n,M) = 0$.

On the other hand suppose that $M \in \rightperp{\class{W}}$. We wish to show that $M$ is Gorenstein AC-injective. First
take an injective coresolution of $M$ as below.
$$0 \xrightarrow{} M \xrightarrow{} I^0 \xrightarrow{} I^1 \xrightarrow{} I^2 \cdots $$ Note that since $M \in \rightperp{\class{W}}$, and $(\class{W}, \rightperp{\class{W}})$ is a hereditary cotorsion pair, the kernel at any spot in the sequence is also
in $\rightperp{\class{W}}$.
Next we use the fact that $(\class{W},\rightperp{\class{W}})$ is a
complete cotorsion pair to find a short exact sequence $0 \xrightarrow{} K \xrightarrow{} I_0 \xrightarrow{} M \xrightarrow{} 0$
where $I_0 \in \class{W}$ and $K \in \rightperp{\class{W}}$.
But $I_0$ must also be in $\rightperp{\class{W}}$
since it is an extension of two such modules.
Since  $(\class{W}, \rightperp{\class{W}})$ has been shown to be an injective cotorsion pair it follows that $I^0$ is an injective module. Continuing with the same procedure on $K$ we can build an injective resolution of $M$
as below: $$ \cdots \xrightarrow{} I_1 \xrightarrow{} I_0 \xrightarrow{} M \xrightarrow{} 0.$$ Again the kernel at
each spot is in $\rightperp{\class{W}}$. Pasting this ``left'' resolution together
with the ``right'' coresolution above we get an exact
sequence $$\cdots \xrightarrow{} I_1 \xrightarrow{} I_0 \xrightarrow{} I^0 \xrightarrow{} I^1 \xrightarrow{} \cdots $$ of
injective modules with $M = \ker{(I^0 \xrightarrow{} I^1)}$. This sequence satisfies the definition of $M$ being a Gorenstein AC-injective $R$-module since now $\Hom_R(A, -)$ will leave the sequence exact for any absolutely clean module $A$.
\end{proof}

\subsection{Example: Graded $\boldsymbol{R[x]/(x^2)}$-modules}
Similar comments to the ones made at the end of Section~\ref{sec-proj-model} hold for the injective model structure. In particular, in the case that $R$ has finite (left and right) global level dimension, then an $A$-module is Gorenstein AC-injective if and only if it is a DG-injective $R$-chain complex.
So the model structure constructed in this section coincides with the usual injective model structure on $\ch$, from~\cite[Chapter~2]{hovey-model-categories}.

\end{document}